\documentclass{amsart}

\usepackage{amsfonts}
\usepackage{amsmath}
\usepackage{amsthm}
\usepackage{amssymb}
\usepackage{latexsym}
\usepackage{enumerate}
\usepackage[scr=zapfc]{mathalpha}

\newtheorem{theorem}{Theorem}[section]
\newtheorem{lemma}[theorem]{Lemma}
\newtheorem{proposition}[theorem]{Proposition}

\newtheorem{question}[theorem]{Question}
\theoremstyle{definition}
\newtheorem{definition}[theorem]{Definition}

\numberwithin{equation}{section}

\newcommand{\N}{\mathbb{N}}

\newcommand{\Q}{\mathbb{Q}}
\newcommand{\R}{\mathbb{R}}

\newcommand{\SSS}{\mathbb{S}}
\newcommand{\forces}{\Vdash}

\newcommand{\I}{\mathcal{I}}

\newcommand{\X}{\mathcal{X}}

\newcommand{\coded}[1]{\mathscr{#1}}
\newcommand{\cX}{\coded{X}}
\newcommand{\cY}{\coded{Y}}
\newcommand{\cZ}{\coded{Z}}

\DeclareMathOperator{\dist}{dist}
\DeclareMathOperator{\supp}{supp}
\DeclareMathOperator{\diam}{diam}
\DeclareMathOperator{\lev}{lev}

\DeclareMathOperator{\cf}{cf}

\title[]{On disjoint tilings of $\ell_1(\kappa)$ by star-shaped bodies}

\author[P. Koszmider]{Piotr Koszmider}
\address{Institute of Mathematics of the Polish Academy of Sciences, \'Sniadeckich 8,
00-656 Warsaw, Poland}
\email{piotr.math@proton.me}
\urladdr{https://piotrkoszmider.github.io}

\author[P. Szewczak]{Piotr Szewczak}
\address{Institute of Mathematics, University of Warsaw, Banacha 2,
02-097 Warsaw, Poland}
\email{p.szewczak@wp.pl}
\urladdr{http://piotrszewczak.pl}

\thanks{The  first-named author was  partially supported by the NCN (National Science
Center, Poland) research grant no.\ 2020/37/B/ST1/02613.}

\subjclass[2020]{
03E35, 03E75, 46B20, 46B26, 41A65, 41A50}

\keywords{disjoint tilings, Banach spaces, $\ell_1(\kappa)$, star-shaped sets, Sacks forcing, proximinal sets, Chebyshev sets}

\begin{document}

\begin{abstract}
We employ the set-theoretic methods
yielding the consistency of the existence of nontrivial pairwise disjoint covers
 of the unit interval (or equivalently $\R^n$ for $n\in\N\setminus\{0\}$) by less than $2^\omega$ closed sets 
in the context of  covers of infinite dimensional Banach spaces $\ell_1(\kappa)$
for infinite $\kappa$ under geometric conditions arising 
in  tiling theory and in  approximation theory.

Specifically, for $\kappa=\omega_1, \omega_2$, using side-by-side Sacks forcing we prove the consistency 
of an arbitrarily large continuum above
$\kappa$ with the existence
 of a highly disconnected proximinal set in $\ell_1(\kappa)$, built from  norm-compact
 pieces separated by a common positive distance, and the existence of 
a normal disjoint tiling of $\ell_1(\kappa)$ by star-shaped bodies of the form $X+B$,
where $X$ is norm compact and $B$ is the unit ball.
We also prove that if $\kappa$ is an uncountable cardinal
of countable cofinality, then $\ell_1(\kappa)$ does not admit
any normal disjoint tiling by bodies of the form $X+B$, where
$X$ is closed and norm-separable and $B$ is the unit ball.

These results complement a result of Klee of 1981 obtained for
$\kappa$ satisfying $\kappa^\omega=\kappa$, recent results of 
De Bernardi, Russo, Sezgek and Somaglia, and a  {\sf ZFC}
a machine-discovered result  (included in the appendix) that there are no nontrivial discrete Chebyshev sets 
in $\ell_1(\kappa)$ when $\kappa<2^\omega$. 
\end{abstract}

\maketitle

\section{Introduction}

By the Sierpi\'nski theorem (\cite{sierpinski})   $\R$ 
can not be covered by a nontrivial countable   pairwise disjoint family of closed subsets.
Essentially the same arguments combined with the classical consequences 
of  Martin's axiom prove that it is consistent that the continuum hypothesis fails and $\R$ 
can not be covered by a nontrivial  pairwise disjoint family of closed sets of cardinality
less than continuum.
However, Stern has  proved in \cite{stern} that  it is consistent that the continuum hypothesis fails and $\R$ 
can  be covered by $\omega_1$ many pairwise disjoint closed subsets.
In fact it is proved in Theorem 3 of \cite{miller} that the
existence of a disjoint cover of cardinality $\omega_1$ by closed sets is equivalent
for any two uncountable Polish spaces (cf. Lemma 2.2 of \cite{brian}).
We refer the reader to the paper \cite{brian} for more information
on the current state of set-theoretic investigations of such phenomena.

The theory of tilings considers general normed spaces rather than $\R$, including
nonseparable spaces which are not even Polish. On the other hand
tilings considered in this theory are covers by closed sets, called \emph{tiles}, such that
the interiors of any two distinct tiles are nonempty and disjoint, but in general their boundaries may intersect. 
It is usual to consider tilings by bodies, that is closed sets which are closures of their interiors.
In particular, if the density of a normed space $X$ is less than continuum, there cannot be 
continuum many pairwise disjoint nonempty open subsets of $X$, so any  tiling  of such a space has cardinality less
than $2^\omega$. If the tiling is pairwise disjoint (called a disjoint tiling) then it yields many
disjoint covers by closed sets of any interval $[x, x']:=\{tx+(1-t)x': t\in [0,1]\}$ for $x, x'\in X$
 by intersecting the tiles with the interval. In this paper we focus on disjoint tilings. However, it 
should be mentioned that any normed space admits a general tiling by some convex bounded bodies
(\cite{zanco2}).

By a classical but surprising result of Klee~\cite{klee}, for every infinite cardinal number $\kappa$
satisfying $\kappa^\omega=\kappa$ there is a disjoint tiling of 
$$
\ell_1(\kappa):=\{x\in\R^\kappa:\|x\|<\infty\},\ \text{ with the norm }\ 
\|x\|=\sum_{\alpha\in \kappa}|x(\alpha)|,$$ 
by unit balls.

Of course, as already considered in \cite{klee} (p. 258), a natural question arises 
whether the hypothesis on $\kappa$ in Klee's result is necessary for the 
existence of tilings with at least similar geometric features.
First, let us note that the smallest cardinal satisfying the hypothesis is $2^\omega$,
no cardinal of countable cofinality satisfies it (see \cite{jech}) and the Singular Cardinal Hypothesis ({\sf SCH})
implies that all cardinals of uncountable cofinality above $2^\omega$ satisfy it (Theorem 5.22 of \cite{jech}).
So there may be three reasons why the hypothesis of Klee's result fails for
$\kappa$: the failure of {\sf SCH}, $\kappa<2^\omega$, or $\cf(\kappa)=\omega$.
We investigate the latter two.
The first seems to be completely left out in the literature, and we leave it as well.

For $\kappa<2^\omega$ already Klee uses Sierpi\'nski's result to conclude  in \cite{klee} that
no separable normed space admits a nontrivial\footnote{By a nontrivial tiling or cover
we will mean one containing at least two elements} disjoint tiling (Proposition 3.1 of \cite{klee}) or
that no normed space of density less than $2^\omega$ admits a nontrivial disjoint tiling
by convex sets  (Proposition 3.3 of \cite{klee}). It immediately follows that
when $[0,1]$ cannot be covered by more than one and less than $2^\omega$ pairwise
disjoint closed sets (which is consistent,
see e.g. \cite{brian}), then
no normed space of density less than $2^\omega$ admits a nontrivial disjoint tiling.
Very recently, De Bernardi, Russo, Sezgek and Somaglia proved in \cite{russo2} that for
$\kappa<2^\omega$ the space $\ell_1(\kappa)$ does not admit any tiling by balls (disjoint or not) .
Our contribution to the above line of research is on the other hand positive and
can be considered an uncountable-dimensional version of Stern's result from \cite{stern}.
A subset $K$ of a 
vector space is called \emph{star-shaped} if there is $x\in K$ such that for every $y\in K$, the
interval $[x, y]$ is included in $K$.

\begin{theorem}\label{thm:mainbelowc} For $\kappa=\omega_1$ or $\kappa=\omega_2$
and an infinite cardinal $\lambda>\kappa$ of uncountable cofinality
it is consistent that $2^\omega=\lambda$ and $\ell_1(\kappa)$ admits
a  normal\footnote{A tiling is normal if there are $0<r<R$ such that all tiles contain
a ball of radius $r$ and are contained in a ball of radius $R$.} 
disjoint tiling by star-shaped bodies of the form $X+B$ where $X$
is norm compact and $B$ is the unit ball of $\ell_1(\kappa)$.
\end{theorem}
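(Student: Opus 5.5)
The plan splits into a ZFC geometric reduction and a forcing construction.

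\textbf{Geometric reduction.} We show that it suffices to find a family $\{X_\xi:\xi<\mu\}$ of norm compact subsets of $\ell_1(\kappa)$ such that:
\begin{enumerate}[(a)]
\item $\dist(X_\xi,X_\eta)\geq 2$ for $\xi\neq\eta$;
\item $\bigcup_\xi X_\xi$ is proximinal with distance function at most $1$, that is, every $y$ has some $\xi$ and $x\in X_\xi$ with $\|y-x\|=\dist(y,\bigcup_\eta X_\eta)\leq 1$.
\end{enumerate}
To make the tiles $X_\xi+B$ pairwise disjoint, I would instead ask that the $X_\xi$ be separated by a distance strictly greater than $2$. Then the tiles are disjoint closed bodies; they are closed because $X_\xi$ is compact and $B$ is closed. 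By (b) they cover the space. Each tile is star-shaped with respect to the center of some ball it contains; taking $X_\xi$ to be a subset of a segment, or star-shaped itself, makes this easy. Normality follows if $\diam X_\xi$ is uniformly bounded, because each tile contains a unit ball and lies in a ball of radius $\diam X_\xi+1$.

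Since a cover of the whole space by pieces pairwise at distance greater than $2$ forces the uncovered gaps to be large, I would instead aim at the following, which is the true geometry in $\ell_1$. Take $X_\xi$ to be compact sets inside the nonseparable lattice $2\mathbb Z^{(\kappa)}$, which is $2$-separated, thickened by compact "Cantor-set" pieces. The aim is that each point of $\ell_1(\kappa)$ lies at distance $\leq 1$ from exactly one $X_\xi$. Klee's construction uses $\kappa^\omega=\kappa$ to enumerate all countable supports and resolve the ties. Here the ties must instead be resolved using fewer than $2^\omega$ pieces. This is exactly analogous to covering $[0,1]$ by $\omega_1$ disjoint closed sets.

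\textbf{Forcing.} Start from a model of GCH and add $\lambda$ Sacks reals side by side with countable supports. The key known properties are:
\begin{itemize}
\item every new real lies in a ground model perfect set, since every real added is captured by a countable subproduct;
\item the Sacks property holds;
\item $\aleph_1,\aleph_2$ are preserved, since the forcing is proper and $\aleph_2$-c.c.\ under CH;
\item $2^\omega=\lambda$.
\end{itemize}
Using $\diamondsuit$- or CH-style bookkeeping in the ground model, of length $\omega_1$ (respectively $\omega_2$, using $2^{\omega_1}=\omega_2$), I would construct the compact pieces as codes: perfect or Borel codes $\cX_\xi$ for $\xi<\kappa$ built along a tree of countable supports $A\subseteq\kappa$. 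The goal is that the reinterpretations in the extension still form a disjoint family whose $1$-neighbourhoods cover. Each point $y$ of the extension has countable support $A$, and its restriction to $A$ lies in a ground model compact set, by the Sacks/continuous reading of names. Therefore, covering every ground model code's compact set of reals in $\ell_1(A)$ by finitely or countably many tiles in a uniformly continuous way lifts to the extension, since the relevant statements are $\Pi^1_1$ or $\Pi^1_2$ and absolute. Pairwise separation by distance $\geq 2$ is a $\Pi^1_1$ property of the codes, so it is preserved.

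\textbf{Main obstacle.} The hardest step is the bookkeeping of the countable supports $A\in[\kappa]^\omega$. There are $\kappa^\omega=\kappa$ of these in the ground model under GCH for $\kappa=\omega_1,\omega_2$, so Klee's transfinite construction applies there. The difficulty is arranging Klee's tie-breaking so that it is given by a continuous, compact-piece rule on each $\ell_1(A)$. Only then is it absolute to the Sacks extension, where $\kappa^\omega<2^\omega$. I expect the restriction to $\omega_1,\omega_2$ to arise from needing a ground model in which $[\kappa]^\omega$ has size $\kappa$ and is covered by an increasing chain of countable elementary submodels that stays a chain after forcing. For $\omega_2$ this uses $\omega_1$-approachability and CH.
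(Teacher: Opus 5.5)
Your geometric reduction is the right target: compact cores of bounded diameter, pairwise more than $2$ apart, with every point within distance $1$ of some core. The forcing part, however, has a genuine gap at the transfer step. You plan to fix the cores in the ground model by a Klee-type construction over countable supports. You then argue that, since every new point lies in a ground-model compact set $K$, a cover of $K$ by countably many tiles is an absolute projective fact. Such countable covers do not exist in general. If $K$ is a segment $[u,v]$ with endpoints in different tiles, then $\{K\cap T\}$ is a nontrivial disjoint closed cover of a continuum, hence uncountable by Sierpi\'nski's theorem. Yet $u+t(v-u)$, for a Sacks real $t$, is a new point whose natural compact set is exactly $K$, and nothing in your plan says how to shrink it to a set meeting only one tile. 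More fundamentally, a tiling fixed in $V$ without reference to the forcing names cannot be expected to survive. Klee's unit-ball tiling exists in $V$ (since $\kappa^\omega=\kappa$ under $\mathsf{GCH}$) and remains disjoint in the extension. If it still covered, its centers would form a nontrivial discrete Chebyshev set in a model with $\kappa<2^\omega$, contradicting Theorem~\ref{thm:mainchebyshev}. So ``making Klee's tie-breaking a continuous compact-piece rule'' hides the whole difficulty, and bookkeeping over $[\kappa]^\omega$ (or approachability and elementary submodels) does not resolve it.

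The missing idea is to manufacture the cores \emph{from the names}. Bookkeep all pairs $(p,\dot x)$ with $p\in\SSS^\omega$ and $\dot x$ an $\SSS^\omega$-name for an element of $\ell_1(\kappa)$; under $\mathsf{GCH}$ there are $\omega_1$, resp.\ $\omega_2$, of them up to equivalence. Given countably many cores $\cX_n$ built so far, either some $q\le p$ already forces $\dot x$ within distance $1$ of one of them, or $p\forces\dist(\dot x,\cX_n)>1$ for all $n$. In the latter case a fusion argument gives $q\le p$ and a ground-model-coded compact $\cY$ with $q\forces\dot x\in\cY$, $\diam(\cY)<1$ and $\dist(\cY,\cX_n)>1$. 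Here $\cY$ is an intersection over $n$ of finite unions, indexed by $\lev(n,n,q)$, of basic closed sets of diameter $\le 3/m_n$. The $(F,n)$-decision supplies a uniform margin $\dist(\dot x,\cX_n)\ge 1+4/m$ on the finitely many level-$n$ nodes, which is what keeps these sets away from $\cX_n$. The new core is $\cY+1_\alpha$ for a fresh coordinate $\alpha$: it is more than $2$ from the old cores, and $q$ forces $\dot x$ into its $1$-neighbourhood. Covering in the $\SSS^\lambda$-extension is then a density argument, since every element is the value of an $\SSS^C$-name with $C$ countable and $\SSS^C\cong\SSS^\omega$; compactness and separation are absolute.

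For $\kappa=\omega_1$ this is a recursion of length $\omega_1$. For $\kappa=\omega_2$ a length-$\omega_2$ recursion fails, because late stages would have to avoid uncountably many cores. The paper instead adds the family generically by a $\sigma$-closed poset of countable families of translated centers. This poset is $\omega_2$-c.c.\ under $\mathsf{CH}$ thanks to extra witnesses ($E_n\to\sup S$, $\epsilon_n>0$) and translation coordinates in $(\sup S,\sup S+\omega_1)$.

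Two smaller points. First, $2\mathbb{Z}^{(\kappa)}$ is not more than $2$-separated: the unit balls at $0$ and $2\cdot1_\alpha$ share $1_\alpha$. Second, a subset of a segment need not give a star-shaped tile; what works is $\diam(X)\le 1$ (Lemma~\ref{B}).
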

\begin{proof} See section 3.5.
\end{proof}
We note that  Klee's tiling satisfies all the properties of our tiling and additionally
has sets $X$ equal to singletons. On the other hand
we prove a negative {\sf ZFC} result concerning cardinals of countable cofinality:

\begin{theorem}\label{maincofinal} Suppose that $\kappa$ is an uncountable cardinal
of countable cofinality. Then $\ell_1(\kappa)$ does not admit any normal disjoint  tiling  
by bodies of the form $X+B$, where $X$ is closed and norm-separable and $B$ is the unit ball
of $\ell_1(\kappa)$.
\end{theorem}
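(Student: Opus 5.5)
The plan is to use the countable cofinality of $\kappa$ to build a convergent sequence of points $x_n$ whose tiles are forced to lie in ever higher ``layers'' of the tiling, and to derive a contradiction at the limit point. Suppose towards a contradiction that $\{X_i+B: i\in I\}$ is a normal disjoint tiling of $\ell_1(\kappa)$, with each $X_i$ closed and norm-separable. Each tile contains a ball of radius $r$, and these balls are pairwise disjoint open sets, so $|I|\le\kappa$ because $\ell_1(\kappa)$ has density $\kappa$. Each $X_i$ is separable, so it lives on a countable support $S_i\subseteq\kappa$. Write $\kappa=\sup_n\kappa_n$ with $\kappa_n$ increasing and $\kappa_n<\kappa$, and split $I=\bigcup_n I_n$ increasingly with $|I_n|\le\kappa_n$. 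Then $\bigcup_{i\in I_n}S_i$ has size $<\kappa$, so there are always coordinates outside it. This is the point where $\cf(\kappa)=\omega$ enters, exactly as in the standard proof that $\kappa^\omega>\kappa$.

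The geometric heart is a \emph{coordinate-line lemma}. Let $T=X+B$ have support $S$, let $x\in\ell_1(\kappa)$, and let $\alpha\notin S\cup\supp(x)$. Since every $z\in X$ vanishes off $S$, we get $\|x+te_\alpha-z\|=\|x-z\|+|t|$. Hence $T\cap\{x+te_\alpha:t\in\R\}=\{x+te_\alpha:|t|\le 1-\dist(x,X)\}$, which is either empty or a segment symmetric about $x$. Consequently, among the tiles whose support misses $\alpha$, only the tile containing $x$ meets this line. Every other tile meeting the line has $\alpha$ in its support.

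Using this lemma, I would construct points $x_n$, tiles $T_n\ni x_n$, and coordinates $a_n$ recursively:
\begin{enumerate}[(i)]
\item Start from $x_0=0$ with tile $T_0$.
\item Given $x_n$ and $T_n$, choose $a_{n+1}$ outside $\bigcup_{i\in I_{n}}S_i\cup\bigcup_{k\le n}S_{T_k}\cup\supp(x_n)$.
\item Let $c_n=1-\dist(x_n,X_{T_n})\le 1$, and set $x_{n+1}=x_n+(c_n+\varepsilon_n)e_{a_{n+1}}$ for a small $\varepsilon_n>0$.
\end{enumerate}
By the lemma, $x_{n+1}\notin T_n$, and the tile $T_{n+1}$ containing $x_{n+1}$ has $a_{n+1}$ in its support. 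By the choice of $a_{n+1}$, this forces $T_{n+1}\notin I_n$. In the end, the tile $T$ of the limit point $x=\lim x_n$ (if the limit exists) lies in some $I_m$, so $a_k\notin S_T$ for all $k>m$. Applying the lemma to $x_{m+1}$ and the directions $a_k$, $k>m+1$, I would compare the membership $x\in T$ with the fact that the $x_k$, $k>m$, lie in tiles outside $I_m$, distinct from $T$. The goal is to reach a contradiction with disjointness: $T$ would contain points near $x$ along directions it does not see, while those points lie in other tiles. Closedness and disjointness of the tiles would then finish the argument.

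I expect the main obstacle to be making $\sum_n(c_n+\varepsilon_n)<\infty$, so that $x_n$ converges. A priori $c_n$ could be close to $1$ at every stage. To handle this, I would first try to replace $x_n$ inside $T_{n+1}$ by a point near the boundary of $T_{n+1}$ before the next step. For this I would exploit normality: $T_{n+1}$ lies in a ball of radius $R$, so moving within $T_{n+1}$ towards its boundary costs a bounded amount, and it should be possible to arrange $c_{n+1}<2^{-n}$. I would also check that such moves keep the supports countable and avoid the coordinates chosen later. If this fails, the fallback is to run the construction inside a fixed interval, where the tiles induce a disjoint closed cover, and to obtain the contradiction via a Sierpi\'nski-type argument applied to the countably many tiles $T_n$ met along the way.
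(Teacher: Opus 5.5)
\paragraph{Verdict.} Your framework matches the paper's: layers $I_n$ of size $<\kappa$, fresh coordinates outside the supports of earlier layers, additivity of the $\ell_1$ norm on disjointly supported vectors, and a contradiction at a limit point. Your coordinate-line lemma is also correct, once you note that each tile is a body, hence closed, so $X+B=\mathrm{B}(X,1)$ by Lemma~\ref{lem:closedtube}. However, the step you flag yourself is a genuine gap, and your proposed repair does not close it.

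\paragraph{Why the repair fails.} Moving $x_{n+1}$ inside $T_{n+1}$ towards its boundary "at a bounded cost" only bounds each step by a constant of order $R$. Nothing makes these costs summable, so $(x_n)$ need not converge and there is no limit point to argue about. The Sierpi\'nski fallback does not help either. Sierpi\'nski's theorem only rules out countable partitions, and the tiles meeting a segment form an uncountable partition of it as soon as there are at least two of them. The countably many $T_n$ you visit do not cover any segment.

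\paragraph{The missing idea.} Maintain the invariant $\dist(x_n,X_{T_n})=1$, i.e.\ $c_n=0$, so that $x_n$ lies on the boundary of its tile. After each move, restore it by going \emph{back} along the short segment just travelled.
\begin{itemize}
\item Start from a boundary point such as $z+e_\alpha$ with $z\in X_{T_0}$ and $\alpha\notin S_{T_0}$. (The paper translates this point to $0$.)
\item Given $x_n$ satisfying the invariant, disjointness gives $\dist(x_n,X_i)\geq 1$ for every tile $X_i+B$. Hence, by additivity, $\dist(x_n+se_{a_{n+1}},X_i)=\dist(x_n,X_i)+s>1$ for all $s>0$ and all $i$ with $a_{n+1}\notin S_i$. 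This covers $T_n$ and every $i\in I_n$.
\item Let $T'$ be the tile containing $x_n+2^{-n}e_{a_{n+1}}$. Then $x_n\notin T'$, and $s\mapsto\dist(x_n+se_{a_{n+1}},X_{T'})$ is continuous, exceeds $1$ at $s=0$, and is at most $1$ at $s=2^{-n}$. So it equals $1$ at some $s\in(0,2^{-n}]$.
\item Put $x_{n+1}:=x_n+se_{a_{n+1}}$ and $T_{n+1}:=T'$. Note $T'\notin I_n$ by the previous item.
\end{itemize}
Now $\|x_{n+1}-x_n\|\leq 2^{-n}$, so $x=\lim x_n$ exists.

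\paragraph{The final step.} With the invariant in place, your vague last step becomes short. Suppose $x\in T$ with $T\in I_m$. Then $\supp(x-x_{m+1})\subseteq\{a_j:j\geq m+2\}$, and this set misses $S_T\cup\supp(x_{m+1})$. Therefore $\dist(x,X_T)=\dist(x_{m+1},X_T)+\|x-x_{m+1}\|$. Since $\dist(x_{m+1},X_T)\geq 1$ and $x\neq x_{m+1}$, this is $>1$, a contradiction. This is precisely the paper's proof of Proposition~\ref{cfomega}: items (2) and (5) there encode the boundary invariant, and the short step back to the boundary via continuity is the key move.
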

\begin{proof} Suppose that there is such a tiling.
Since all sets $X+B$ in the tiling are bodies, they are closed and hence
they are of the form $\mathrm{B}(X, 1)$ by Lemma~\ref{lem:closedtube}. 
By Lemma~\ref{lem:normalcard} the tiling has cardinality $\kappa$,
contradicting Proposition \ref{cfomega}.
\end{proof}
The authors were informed by T. Russo that he, C. A. De Bernardi and J. Somaglia obtained
independently an analogous negative result in the case of disjoint tiling by balls.

Now let us discuss connections of these results with approximation theory which was
the main motivation of \cite{klee}.
In approximation theory for a given metric space $M$ one considers 
\emph{proximinal sets} $K\subseteq M$, i.e., sets such that every $x\in M$
admits a point of $K$ which is closest in $K$ to $x$. The map $P_K: M\rightarrow \wp(K)$ sending $x$
to the set of all such closest points in $K$ is called the \emph{metric projection}.
A subset $C$ of a metric space $M$ is called a \emph{Chebyshev set} if for every $x\in M$ 
there is exactly one $y\in C$ which is the closest point of $C$ to $x$.
So a proximinal set $K$ is a Chebyshev set if and only if
the metric projection $P_K$ is a function from $M$ into $K$.

Note that the centers of balls of a disjoint tiling by unit balls like Klee's one  form a discrete
Chebyshev set. 
In general, if $K$ is Chebyshev, the fibers of the metric projection $P_K$ 
  called \emph{Voronoi cells}  of $K$ form a disjoint cover of $M$ by
closed  \emph{star-shaped} sets (cf. Section 2 of \cite{russo1}). 

Discrete nontrivial Chebyshev sets provide
an extreme example of unique best approximation which is not continuous (as the
ambient Banach space is connected). To the best of our knowledge, Klee's examples
are still the only known source of discrete Chebyshev sets in normed spaces.
It could be added that one of the well-known open problems in approximation theory
is whether an infinite-dimensional Hilbert space admits a nonconvex Chebyshev set
(\cite{moors}).

Arguments as above in the case of a discrete Chebyshev set imply that
given the tiling $\{X_i+B: i\in I\}$ of
Theorem \ref{thm:mainbelowc} the set $K=\bigcup\{X_i: i\in I\}$ is proximinal,
but best approximations cannot be chosen continuously, i.e., $P_K$ admits no continuous selection.
Also our tiles of Theorem \ref{thm:mainbelowc} share with the Voronoi cells
of discrete Chebyshev sets such properties as being star-shaped, having
nonempty interiors and being closed.
However, despite these similarities, Theorem \ref{thm:mainchebyshev} 
(obtained by a digital assistant and included with the proof in the Appendix for the convenience of the reader)
shows that there are no discrete Chebyshev sets in
$\ell_1(\kappa)$ for $\kappa<2^\omega$ and so our sets $X$ from Theorem \ref{thm:mainbelowc}
cannot be replaced by points even consistently.

The structure of the paper is the following. 
Section 2 contains preliminaries.
In Section~\ref{sec:sacks} we prove Theorem \ref{thm:mainbelowc}, using
side-by-side Sacks forcing and adapting Klee's construction for points to   forcing names
for points of $\ell_1(\kappa)$ obtaining the sets $X$ from Theorem \ref{thm:mainbelowc}.
In Section~\ref{sec:cofw} we prove Proposition \ref{cfomega}, and hence
Theorem \ref{maincofinal}.
The Appendix contains the machine-discovered above-mentioned result  on Chebyshev sets with the proof.
 The results of this paper generate
the following natural questions.

\begin{question}$ $
\begin{enumerate}
\item Is it consistent that there is a disjoint normal tiling of $\ell_1(\omega_n)$ 
by  bodies  of the form $X+B$ where $X$ is separable and $B$ the unit ball
while $\omega_n<2^\omega$ for $n\in\N\setminus\{0, 1, 2\}$?
\item Is it consistent that there is a normal disjoint tiling of $\ell_1(\omega_\omega)$
by  star-shaped bodies?
\end{enumerate}
\end{question}
Concerning the first question one could conjecture that the methods like those
in \cite{morasses} could provide the positive answer in the case of $n=3$.
On the other hand Theorem \ref{maincofinal} provides the limitation on
results of this type.

\section{Preliminaries}\label{sec:prelim}

By $\N$ denote the set of nonnegative integers and
$\N_+=\N\setminus\{0\}$.
Let $\kappa$ be a cardinal. 
The restriction of $x\in\ell_1(\kappa)$ to a set $A\subseteq\kappa$ is denoted by $x|A$.
For $x\in\ell_1(\kappa)$, $\supp(x)$ denotes its support, and for
$X\subseteq\ell_1(\kappa)$ we put
$\supp(X)=\bigcup_{x\in X}\supp(x)$. For
$S\subseteq\kappa$ we identify $\ell_1(S)$ with the subspace of
$\ell_1(\kappa)$ consisting of vectors supported by $S$.
By $1_\alpha$ we denote the unit vector 
whose support is $\{\alpha\}$ for an ordinal $\alpha$.
By $\omega_1, \omega_2, 2^\omega$ we will denote the first
two uncountable cardinals and the cardinality of the continuum, respectively.

The distance from a point $x$ to a set $X$ and the distance between sets
$X$ and $Y$ are denoted by $\dist(x,X)$ and $\dist(X,Y)$, respectively,
and $\diam(X)$ denotes the diameter of $X$.
For $r>0$, $\mathrm{B}(X,r)=\{x\in \ell_1(\kappa): \dist(x, X)\leq r\}$.
Also $B(\ell_1(\kappa))=\mathrm{B}(\{0\},1)$.
The notions of tilings, normal tiling, 
body, Chebyshev set, proximinal set were defined in the Introduction.
Our terminology and notation concerning Banach spaces, set theory and general
topology are standard and follow, respectively, \cite{fabian}, \cite{jech},
and~\cite{engelking}.

\begin{lemma}\label{support}
A subset $X$ of $\ell_1(\kappa)$ is separable if and only if $\supp(X)$ is countable. 
\end{lemma}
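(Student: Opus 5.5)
The plan is to prove the two implications separately, using only two facts: every element of $\ell_1(\kappa)$ has countable support, and the restriction map $x\mapsto x|S$ onto $\ell_1(S)$ is a norm-one projection.

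For the backward implication, suppose $S=\supp(X)$ is countable. Then $X\subseteq\ell_1(S)$, and $\ell_1(S)$ is isometric to $\ell_1$ (or to a finite-dimensional $\ell_1^n$), hence separable. One can see this directly: finitely supported vectors with rational coordinates in $S$ form a countable dense subset. A subspace of a separable metric space is separable, so $X$ is separable.

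For the forward implication, let $D\subseteq X$ be countable and dense. Put $S=\supp(D)=\bigcup_{d\in D}\supp(d)$; it is countable as a countable union of countable sets. I will show $\supp(X)\subseteq S$. Fix $x\in X$ and $\alpha\in\supp(x)$, and suppose toward a contradiction that $\alpha\notin S$. Then for every $d\in D$ we have $d(\alpha)=0$, so
$$\|x-d\|\geq|x(\alpha)-d(\alpha)|=|x(\alpha)|>0.$$
This contradicts the density of $D$ in $X$. Hence $\alpha\in S$, and $\supp(X)=\supp(D)$ is countable.

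Neither step is a real obstacle. The only point needing care is the coordinate estimate $|x(\alpha)|\leq\|x\|$, which is immediate from the definition of the $\ell_1$ norm.
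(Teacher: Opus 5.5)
Your proof is correct and complete. The paper states this lemma without proof as a standard fact, and your two steps are the routine verification it takes for granted: separability of $\ell_1(S)$ for countable $S$, and the estimate $\|x-d\|\geq|x(\alpha)|$ for $\alpha\notin\supp(D)$, which gives $\supp(X)=\supp(D)$. The only superfluous item is the norm-one projection $x\mapsto x|S$, which you announce but never use.
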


\begin{lemma}\label{lem:normalcard}
Suppose that $X$ is a Banach space of infinite density $\kappa$.
Every normal tiling of $X$ has cardinality $\kappa$.
\end{lemma}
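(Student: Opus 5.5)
We prove the two inequalities $|\mathcal{T}|\le\kappa$ and $|\mathcal{T}|\ge\kappa$ separately for a normal tiling $\mathcal{T}$ of $X$, with constants $0<r<R$ as in the definition.

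\emph{Upper bound.} Each tile $T\in\mathcal{T}$ contains a ball $\mathrm{B}(\{x_T\},r)$. The interiors of distinct tiles are disjoint, so the open balls of radius $r$ around the points $x_T$ are pairwise disjoint. Hence $\{x_T: T\in\mathcal{T}\}$ is $r$-separated: any two distinct points are at distance at least $r$. Fix a dense set $D\subseteq X$ of cardinality $\kappa$. For each $T$ choose $d_T\in D$ with $\|d_T-x_T\|<r/2$. Two distinct tiles cannot receive the same $d_T$, since then $\|x_T-x_{T'}\|<r$. So $T\mapsto d_T$ is injective and $|\mathcal{T}|\le\kappa$. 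This step only needs the lower radius $r$; disjointness of the tiles is not needed.

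\emph{Lower bound.} Each tile lies in a ball of radius $R$. Suppose $|\mathcal{T}|<\kappa$; since $\kappa$ is infinite, $\mathcal{T}$ would then be at most countable or at least of size $<\kappa$. I would show that the union of fewer than $\kappa$ sets, each contained in a ball of radius $R$, cannot cover $X$. The plan is to find $\kappa$ many points that are pairwise more than $2R$ apart, so that each ball of radius $R$ contains at most one of them. Since $X$ has density $\kappa$, there is an $\varepsilon$-separated set of cardinality $\kappa$ for some fixed $\varepsilon>0$. To see this, note that a maximal $1/n$-separated set $S_n$ is $1/n$-dense, so $\bigcup_n S_n$ is dense. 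Its cardinality is therefore at least $\kappa$. If $\kappa$ has uncountable cofinality, some single $S_n$ already has size $\kappa$. If $\operatorname{cf}\kappa=\omega$, the union of the sets $S_n$ still has size $\kappa$; bounding the cover tile by tile for each $n$ suffices, since fewer than $\kappa$ tiles cannot handle $\sup_n|S_n|=\kappa$ once we pass to a scaled version as below.

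Scaling a $1$-separated set $S$ of size $\mu$ by $3R$ gives a $3R$-separated set $3R\cdot S$, still in $X$ because $X$ is a vector space. Each tile, having diameter at most $2R$, contains at most one point of $3R\cdot S_n$. Therefore the number of tiles is at least $|S_n|$ for every $n$, hence at least $\sup_n|S_n|$. Since $\kappa\le|\bigcup_n S_n|\le\aleph_0\cdot\sup_n|S_n|$ and $\kappa$ is uncountable, this gives $\sup_n|S_n|\ge\kappa$. When $\kappa=\aleph_0$, the tiling is infinite simply because $X$ is unbounded and each tile is bounded. This yields $|\mathcal{T}|\ge\kappa$.

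The only delicate point is the sup argument in the countable-cofinality case. It is handled by using all the sets $S_n$ rather than a single one. The scaling trick, which converts small separation into separation larger than the tile diameter, is routine.
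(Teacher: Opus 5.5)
Your proof is correct in substance, apart from one slip noted below. For the upper bound it does the same thing as the paper, and it takes a different route for the lower bound.

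\textbf{Upper bound.} The paper only observes that the interiors of distinct tiles are nonempty and pairwise disjoint, so each contains its own point of a dense set of size $\kappa$. Your $r$-separated centers are a slightly roundabout version of this, and the uniform radius $r$ is not needed.

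\textbf{Lower bound.} The paper uses a covering argument where you use packing. Normality gives a cover of $X$ by balls $\mathrm{B}(\{x_i\},R)$, $i\in I$, with $|I|=|\mathcal{T}|$, and $I$ is infinite because $X$ is unbounded. Applying the cover to $nRy$ yields some $i$ with $\|y-x_i/(nR)\|\le 1/n$. Hence $\{x_i/(nR): i\in I,\ n\in\N_+\}$ is dense, has size at most $|I|$, and so $\kappa\le|\mathcal{T}|$. This takes one line and needs no maximal separated sets, no Zorn's lemma, and no distinction by $\cf(\kappa)$ or $\kappa=\aleph_0$.

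Your argument blows maximal $1/n$-separated sets up beyond the tile diameter and then takes $\sup_n|S_n|$. It also works and handles countable cofinality correctly, but it is longer and yields nothing more general. Both arguments really show that any cover of $X$ by sets of uniformly bounded diameter has size at least the density.

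\textbf{The slip.} $S_n$ is only $1/n$-separated, so $3R\cdot S_n$ is merely $3R/n$-separated. A tile of diameter at most $2R$ can then contain several of its points. You must scale by $3Rn$ instead: $nS_n$ is $1$-separated, so $3Rn\cdot S_n$ is $3R$-separated.

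\textbf{An unneeded claim.} You assert that there is a single $\varepsilon$-separated set of size $\kappa$, but your justification only covers $\cf(\kappa)>\omega$. Your final count uses only $\sup_n|S_n|$, so you should drop that claim and the muddled sentences around it.
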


\begin{proof}
Since the interiors of distinct elements of a tiling
are disjoint and nonempty the cardinality cannot be bigger than $\kappa$.
The normality yields a cover of $X$ by $\{\mathrm{B}(\{x_i\}, R): i\in I\}$, where
the cardinality of $I$ and the cardinality of the tiling are the same and infinite and $R>0$.
For every $y\in X$ and $n\in \N$ there is $i\in I$ such that $\|nRy-x_i\|\leq R$, so
$\{x_i/nR: i\in I, n\in \N\setminus\{0\}\}$ forms a dense subset of $X$ of cardinality at most 
 the cardinality of $I$.
\end{proof}

\begin{lemma}\label{lem:closedtube}
Suppose that $X\subseteq\ell_1(\kappa)$ is nonempty and closed and $B$ is
the unit ball of $\ell_1(\kappa)$. Then
$
\mathrm{B}(X,1)=X+B
$
if and only if $X+B$ is closed.
\end{lemma}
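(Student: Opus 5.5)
The plan is to prove the two inclusions separately and note that one of them holds for every closed $X$, so that the equivalence reduces to a single inclusion.

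First I will check that $X+B\subseteq \mathrm{B}(X,1)$ always holds. If $z=x+b$ with $x\in X$ and $\|b\|\le 1$, then $\dist(z,X)\le \|z-x\|=\|b\|\le 1$.

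Next I will check that $\mathrm{B}(X,1)$ is always closed. The function $z\mapsto \dist(z,X)$ is $1$-Lipschitz, so $\mathrm{B}(X,1)$ is the preimage of $[0,1]$ under a continuous map. This gives the implication ``$\Rightarrow$'': if $\mathrm{B}(X,1)=X+B$, then $X+B$ is closed.

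For ``$\Leftarrow$'' I assume $X+B$ is closed and must show $\mathrm{B}(X,1)\subseteq X+B$. Take $z$ with $\dist(z,X)\le 1$.
\begin{itemize}
\item If $\dist(z,X)<1$, pick $x\in X$ with $\|z-x\|<1$. Then $z=x+(z-x)\in X+B$ directly.
\item If $\dist(z,X)=1$, I use the segment from $z$ to points of $X$ and pull back. Choose $x_n\in X$ with $\|z-x_n\|\to 1$.
\end{itemize}
In the second case, set
\[
z_n = x_n + \frac{z-x_n}{\|z-x_n\|}.
\]
Then $z_n\in X+B$, since the added vector has norm exactly $1$. Moreover
\[
\|z_n-z\| = \bigl|\,1-\|z-x_n\|\,\bigr| \to 0,
\]
because $z_n-z=(z-x_n)\bigl(\tfrac{1}{\|z-x_n\|}-1\bigr)$ and $\|z-x_n\|\ge 1$. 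So $z$ is a limit of points of $X+B$, and closedness of $X+B$ gives $z\in X+B$.

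I expect no real obstacle. The only point needing care is the boundary case $\dist(z,X)=1$, where the infimum need not be attained. The rescaling trick above handles it without any compactness or proximinality of $X$, and it works in any normed space; the specific structure of $\ell_1(\kappa)$ is not used. Nonemptiness of $X$ is needed only so that $\dist(z,X)$ is finite and the sequence $(x_n)$ exists.
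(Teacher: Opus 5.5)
Your proof is correct and follows the same route as the paper, which shows $\overline{X+B}=\mathrm{B}(X,1)$ using three facts: the inclusion $X+B\subseteq\mathrm{B}(X,1)$, the closedness of $\mathrm{B}(X,1)$, and the approximation of every point of $\mathrm{B}(X,1)$ by a sequence from $X+B$. Your rescaled points $z_n=x_n+(z-x_n)/\|z-x_n\|$ simply make explicit the approximating sequence that the paper leaves to the reader.
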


\begin{proof}
It is enough to  note that
$
\overline{X+B}=\mathrm{B}(X,1).
$
Indeed, $X+B\subseteq\mathrm{B}(X,1)$, and $\mathrm{B}(X,1)$ is closed.
For the other inclusion note that for any  $y\in\mathrm{B}(X,1)$ there is a sequence
of points in $X+B$  which converges to $y$.
\end{proof}

\begin{lemma}\label{B} Suppose that $X\subseteq\ell_1(\kappa)$ is compact
and $\diam(X)\leq 1$.
Then $\mathrm{B}(X,1)=X+B(\ell_1(\kappa))$ is a bounded star-shaped body.
\end{lemma}

\begin{proof}
Since $X$ is compact, $X+B(\ell_1(\kappa))$ is closed, so
$\mathrm{B}(X,1)=X+B(\ell_1(\kappa))$ by Lemma~\ref{lem:closedtube}.
It is bounded since $X$ is bounded, and it is a body since every
$y=x+z$ with $x\in X$ and $\|z\|\leq1$ is the limit of the interior
points $x+tz$, where $0\leq t<1$.

Finally, if $y\in X+B(\ell_1(\kappa))$, choose $x_1\in X$ such that
$\|y-x_1\|\leq1$. For every $x_2\in X$ and $0\leq t\leq1$ we have
$\|tx_2+(1-t)y-x_1\|\leq t\|x_2-x_1\|+(1-t)\|y-x_1\|\leq1$,
so $[x_2,y]\subseteq X+B(\ell_1(\kappa))$.
\end{proof}

\section{Positive consistency result below the continuum}\label{sec:sacks}

Throughout this section $\kappa=\omega_1$ or $\kappa=\omega_2$.

The script font used below marks sets determined by fixed ground-model
codes.
Thus, $\cX(S,m,A,\I)$ is interpreted in the model under consideration according to
Definition~\ref{df:basic}, an $S$-center $\cY$ is interpreted from the ground-model data
occurring in its representation in Definition~\ref{dfn:Scenter}, and a translated center
$\cX$ is interpreted from the code of $\cY_{\cX}$ together with $\alpha_{\cX}$.
We regard $\cX_0=\{1_0\}$ as a fixed trivial code. The same symbol may therefore denote
different sets in different forcing extensions. We suppress check accents on ground-model
parameters in forcing statements and use dots only for genuine forcing names, such as
$\dot{x}$; in particular, no dot is attached to $\cX$ or $\cY$.

Each coded set used here has countable support and can be viewed as a coded closed subset
of a Polish space $\ell_1(T)$ for an appropriate countable $T\subseteq\kappa$.
 By Mostowski absoluteness
for $\Sigma^1_1$ statements, see \cite[Theorem~25.4]{jech}, the existence statements about
points in these sets used below are absolute. In particular, nonemptiness and inequalities
involving $\dist$ or $\diam$ and rational bounds are preserved between the ground model and
forcing extensions.

\subsection{Basic closed approximations}

\begin{definition}\label{df:basic}
Let $m\in\N_+$, $S\subseteq\kappa$ be a countable set, $A\subseteq S$ be a finite set,
and
$\I=(I_\alpha)_{\alpha\in A}$ be an indexed family of closed nonempty intervals in
$\R$ with rational endpoints such that $\sum_{\alpha\in A}\diam(I_\alpha)\leq
\frac{1}{m}$ and $0\notin\bigcup\I$.
Let $\cX(S, m, A, \I)$ be the set of all $x\in \ell_1(\kappa)$ satisfying:
\begin{enumerate}
\item $\supp(x)\subseteq S$;
\item $x|A\in \prod_{\alpha\in A}I_\alpha$;  
\item $\|x|(S\setminus A)\|\leq \frac{1}{m}$.
\end{enumerate}
\end{definition}

\begin{lemma}\label{ctbl-approx}
For a fixed countable set $S\subseteq\kappa$, there are countably many $\cX(S,m,A, \I)$
sets in $\ell_1(\kappa)$.
\end{lemma}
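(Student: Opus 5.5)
The plan is to count the parameters that determine $\cX(S,m,A,\I)$ once $S$ is fixed. By Definition~\ref{df:basic}, the set $\cX(S,m,A,\I)$ depends only on the triple $(m,A,\I)$. So it suffices to show that the family of admissible triples is countable; the map from triples onto the sets $\cX(S,m,A,\I)$ is then a surjection from a countable set, and distinct triples yielding the same set only lowers the count.

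The count proceeds in three steps.
\begin{enumerate}
\item The parameter $m$ ranges over $\N_+$, which is countable.
\item The set $A$ ranges over the finite subsets of the countable set $S$. Writing this family as $\bigcup_{n\in\N}[S]^n$, each $[S]^n$ injects into $S^n$, so the family is countable.
\item For a fixed finite $A$, the family $\I=(I_\alpha)_{\alpha\in A}$ is a function from $A$ into the set of closed nonempty intervals with rational endpoints. Each such interval is determined by a pair of rationals, so there are countably many intervals. Hence there are at most $|\Q^2|^{|A|}=\omega$ choices of $\I$. The constraints $\sum_{\alpha\in A}\diam(I_\alpha)\le\frac1m$ and $0\notin\bigcup\I$ only remove options.
\end{enumerate}
Taking the countable union over $m$ and $A$ of countable sets (a countable union of countable sets, using countable choice, or explicit enumerations since all the sets involved are explicitly well-orderable given an enumeration of $S$) shows that the set of triples is countable.

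There is no real obstacle here. The only point worth noting is that the definition uses rational endpoints and a finite $A$. Without these restrictions the count would fail, since allowing arbitrary reals or infinite $A$ would give continuum many sets.
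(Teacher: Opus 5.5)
Your proposal is correct and is the intended argument: the paper states this lemma without proof, treating it as immediate that once $S$ is fixed, $\cX(S,m,A,\I)$ is determined by countably many choices of $m\in\N_+$, a finite $A\subseteq S$, and finitely many intervals with rational endpoints, which is exactly your count. One cosmetic point: when $A=\emptyset$ there is exactly one choice of $\I$, not $\omega$ many, so "at most countably many" is the right wording at that step; your surjection argument already handles this.
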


\begin{lemma}\label{fct:IBm}
For each set $\cX(S,m, A,\I)$ in $\ell_1(\kappa)$, we have $\diam(\cX(S,m,
A,\I))\leq \frac{3}{m}.$
\end{lemma}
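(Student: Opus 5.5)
The bound follows directly from the triangle inequality, after splitting the support into the finite part $A$ and the remainder $S\setminus A$. Take $x,y\in\cX(S,m,A,\I)$. By clause (1) of Definition~\ref{df:basic}, both supports lie in $S$, so
\[
\|x-y\| = \|(x-y)|A\| + \|(x-y)|(S\setminus A)\|.
\]
We estimate the two terms separately.

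For the finite part, clause (2) gives $x(\alpha),y(\alpha)\in I_\alpha$ for every $\alpha\in A$. Hence $|x(\alpha)-y(\alpha)|\le \diam(I_\alpha)$, and summing over $\alpha\in A$ yields
\[
\|(x-y)|A\|\le \sum_{\alpha\in A}\diam(I_\alpha)\le \tfrac1m
\]
by the standing hypothesis on $\I$.

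For the remaining part, clause (3) together with the triangle inequality in $\ell_1(S\setminus A)$ gives
\[
\|(x-y)|(S\setminus A)\|\le \|x|(S\setminus A)\|+\|y|(S\setminus A)\|\le \tfrac2m .
\]

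Adding the two estimates gives $\|x-y\|\le \frac3m$. Since $x,y$ were arbitrary, $\diam(\cX(S,m,A,\I))\le\frac3m$. If the set is empty, the bound holds vacuously under any convention, although it is nonempty here because each $I_\alpha$ is nonempty.

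No step here is a real obstacle. The only point needing care is that the $\ell_1$ norm splits additively over the disjoint coordinate sets $A$ and $S\setminus A$.
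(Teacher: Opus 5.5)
Your proof is correct. The paper states this lemma without proof, and your argument is the routine verification it leaves to the reader: the $\ell_1$ norm splits additively over $A$ and $S\setminus A$, and these parts contribute at most $\frac{1}{m}$ and $\frac{2}{m}$ respectively.
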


\begin{lemma}\label{intpro-closed} Each set $\cX(S,m, A,\I)$ in $\ell_1(\kappa)$ is
norm-closed.
\end{lemma}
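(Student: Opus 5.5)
We will write $\cX(S,m,A,\I)$ as an intersection of three sets, each closed in the norm topology of $\ell_1(\kappa)$. The plan is to use only one fact: for every $\alpha\in\kappa$ the coordinate functional $x\mapsto x(\alpha)$ is continuous, since $|x(\alpha)-y(\alpha)|\leq\|x-y\|$. More generally, for any $T\subseteq\kappa$ the restriction map $x\mapsto x|T$ is $1$-Lipschitz, because $\|x|T-y|T\|\leq\|x-y\|$.

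\textbf{Condition (1).} The set $\{x:\supp(x)\subseteq S\}$ equals $\bigcap_{\alpha\in\kappa\setminus S}\{x: x(\alpha)=0\}$. Each set in this intersection is the preimage of the closed set $\{0\}$ under a continuous functional, so the intersection is closed.

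\textbf{Condition (2).} Here $\{x: x|A\in\prod_{\alpha\in A}I_\alpha\}=\bigcap_{\alpha\in A}\{x: x(\alpha)\in I_\alpha\}$. Each $I_\alpha$ is a closed interval, so each set is a preimage of a closed set under a continuous functional, and a finite intersection of closed sets is closed.

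\textbf{Condition (3).} The map $x\mapsto\|x|(S\setminus A)\|$ is continuous, as the composition of the Lipschitz restriction map with the norm. Hence $\{x:\|x|(S\setminus A)\|\leq \frac1m\}$ is the preimage of $[0,\frac1m]$ and is closed.

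Intersecting the three closed sets gives the lemma. There is no real obstacle. The only point to check is that the restriction map is Lipschitz, and this is immediate from the definition of the $\ell_1$ norm as a sum over coordinates.
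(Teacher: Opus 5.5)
Your proof is correct and follows the same route as the paper. The paper likewise notes that the $S$-supported vectors form a closed set, that the coordinate functionals and $x\mapsto\|x|(S\setminus A)\|$ are continuous, and that the intervals $I_\alpha$ are closed, and concludes closedness directly from the definition; you supply the details it leaves implicit, namely writing the $S$-supported vectors as an intersection of kernels of coordinate functionals and observing that the restriction map is $1$-Lipschitz.
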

\begin{proof}
The set of all $S$-supported vectors is closed in $\ell_1(\kappa)$.
For every $\alpha\in A$, the coordinate functional $x(\alpha)$ is continuous, and so is
$\|x|(S\setminus A)\|$.
Since the intervals $I_\alpha$ are closed, it follows directly from the definition
that $\cX(S,m,A,\I)$ is closed.
\end{proof}

\subsection{Sacks approximations}
For an infinite cardinal $\lambda$ let
$\SSS^{\lambda}$ be the countable-support  side-by-side product of $\lambda$ many copies of the Sacks
forcing. We will also consider $\SSS^C$ for a countable set of ordinals $C$, 
defined analogously to $\SSS^\lambda$.
Our terminology concerning $\SSS^\lambda$ follows Baumgartner~\cite{failure}, with one
exception: for a finite set $F\subseteq\omega$, instead of writing $p\leq q\pmod{F,n}$ we
write $p\leq_{F,n}q$.
In our applications we use only the relation $p\leq_{n,n}q$, where the first coordinate
denotes the set $\{0,\ldots,n-1\}$.
We will also use without further mention the standard continuous reading of names for 
side-by-side products of Sacks forcing. 
In the terminology of Baumgartner, the corresponding determination of names is 
given by Theorem 1.11 of \cite{failure};
 see also Section II of \cite{life}
 and Lemma 4 of \cite{vera} for formulations in terms
 of continuous conditions and continuous reading of names. In particular, after strengthening 
a condition, a name for a countable sequence of ground-model objects is determined by
 a countable ground-model code and depends on countably many Sacks coordinates.
\begin{lemma}\label{lem:IBm}
Let $p\in\SSS^{\omega}$, $\dot{x}$ be an $\SSS^{\omega}$-name for an element of
$\ell_1(\kappa)$ such that $\SSS^{\omega}\forces\dot{x}\neq 0$, and let
$S\subseteq\kappa$ be a countable set such that
$$
p\forces\ \supp(\dot{x})\subseteq S\ \wedge\ \sup\supp(\dot{x})=\sup S.
$$
For every $m\in\N_+$ and $\tilde{\alpha}\leq\sup S$ such that
$p\forces\supp(\dot{x})\cap[\tilde{\alpha},\kappa)\neq\emptyset$, there is a condition $q\leq p$ and a
set $\cX(S,m, A, \I)$ in $\ell_1(\kappa)$ such that
$$
\tilde{\alpha}\leq\max A\quad\text{and}\quad
q\forces \dot{x} \in \cX(S, m, A, \I).
$$
\end{lemma}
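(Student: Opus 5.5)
Since $p$ forces $\dot x\in\ell_1(\kappa)$ with support in $S$, the plan is to decide enough of $\dot x$ below $p$ by pure extension, without touching the Sacks-specific fusion machinery.

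\textbf{Choosing $A$.} Because $\supp(\dot x)$ is forced to meet $[\tilde\alpha,\kappa)$, we first extend $p$ to $p_1$ deciding an ordinal $\beta\in S$ with $\beta\geq\tilde\alpha$ and $p_1\forces\dot x(\beta)\neq 0$. This is possible since $S$ is a countable ground-model set, so the witness can be decided by extension. Next, since $\|\dot x\|<\infty$ is forced, we extend $p_1$ to $p_2$ deciding a finite set $A_0\subseteq S$ such that
\[
p_2\forces \|\dot x|(S\setminus A_0)\|<\tfrac{1}{2m}.
\]
To do this, enumerate $S=\{s_k:k\in\N\}$ and decide $k$ with the tail $\sum_{j\geq k}|\dot x(s_j)|<\frac1{2m}$. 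We then put $A=A_0\cup\{\beta\}$. This gives $\tilde\alpha\leq\beta\leq\max A$, where $\max A$ is taken in the ordinal order.

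\textbf{Choosing the intervals.} Coordinates $\alpha\in A$ at which $\dot x(\alpha)$ might be $0$ are a problem, because of the requirement $0\notin\bigcup\I$. So we further extend $p_2$ to $p_3$ deciding the finite set $A'=\{\alpha\in A:\dot x(\alpha)\neq0\}$; it contains $\beta$. We then decide rational $\varepsilon>0$ with $|\dot x(\alpha)|>\varepsilon$ for all $\alpha\in A'$, and replace $A$ by $A'$. The dropped coordinates contribute $0$ to the tail, so the tail bound still holds. Finally, let $\delta=\min\bigl(\varepsilon,\frac{1}{m|A|}\bigr)/2$. We extend to $q$ deciding, for each $\alpha\in A$, a rational $r_\alpha$ with $|\dot x(\alpha)-r_\alpha|<\delta/2$. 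Set $I_\alpha=[r_\alpha-\delta/2,\,r_\alpha+\delta/2]$. Then $\sum_{\alpha\in A}\diam I_\alpha\leq|A|\delta\leq\frac1m$. Also $0\notin I_\alpha$, because $|r_\alpha|>\varepsilon-\delta/2\geq\delta/2$.

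\textbf{Conclusion and main difficulty.} We then have $q\forces\supp(\dot x)\subseteq S$, $q\forces\dot x|A\in\prod I_\alpha$, and $q\forces\|\dot x|(S\setminus A)\|\leq\frac1m$. Hence $q\forces\dot x\in\cX(S,m,A,\I)$, where the code is a ground-model object because all the data was decided. Each individual step is a single decision by extension, and countably or finitely many decisions suffice; a finite sequence of extensions handles this, since decisions about finitely many real-valued quantities to rational precision are finitely many steps. The main point to watch is that the decided tail bound uses a fixed enumeration of the ground-model set $S$, so that the finite set $A_0$ is a ground-model set; the enumeration is fixed in advance, which makes this straightforward. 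The hypothesis $\sup\supp(\dot x)=\sup S$ is not needed for this lemma beyond guaranteeing that $\tilde\alpha$ is meaningful.
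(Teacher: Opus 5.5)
Your proposal is correct and follows essentially the same route as the paper: extend to decide $\beta\geq\tilde{\alpha}$ in the support, then decide a finite $A\ni\beta$ of nonzero coordinates whose complement in $S$ carries norm at most $\frac{1}{m}$, then decide rational intervals around the values $\dot{x}(\alpha)$ for $\alpha\in A$ that avoid $0$ and have total length at most $\frac{1}{m}$. Your trimming of $A$ to its nonzero coordinates and the explicit $\varepsilon,\delta$ estimates spell out what the paper compresses into ``$p\forces A\subseteq\supp(\dot{x})$'' and the one-line choice of the intervals.
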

\begin{proof}
First, by strengthening $p$ if needed and keeping the same notation, choose
$\beta\in S\cap[\tilde{\alpha},\kappa)$ such that
$p\forces\beta\in\supp(\dot{x})$.
Since $p\forces\dot{x}\in\ell_1(\kappa)$ and $\supp(\dot{x})\subseteq S$, by
strengthening $p$ again we may choose a finite set $A\subseteq S$ containing
$\beta$ such that
$$
p\forces\ A\subseteq\supp(\dot{x})\ \wedge\ \sum_{\alpha\in S\setminus
A}|\dot{x}(\alpha)|\leq \frac{1}{m}.
$$
In particular, $\tilde{\alpha}\leq\beta\leq\max A$.
Since $A$ is finite and $p\forces\dot{x}(\alpha)\neq0$ for every $\alpha\in A$,
by strengthening $p$ if needed choose a condition $q\leq p$ and an indexed
family $\I=(I_\alpha)_{\alpha\in A}$ of closed intervals in $\R$ with rational
endpoints such that
$$
0\notin\bigcup_{\alpha\in A}I_\alpha,\quad
\sum_{\alpha\in A}\diam(I_\alpha)\leq\frac{1}{m},\quad\text{and}\quad
q\forces \dot{x}|A\in\prod_{\alpha\in A}I_\alpha.
$$
We conclude that $q\forces\dot{x}\in \cX(S,m,A,\I)$.
\end{proof}

\begin{lemma}[$(F,n)$-decision, cf.~Lemmas~1.5--1.8 of \cite{failure}]\label{lem:Fndec}
Let $p\in\SSS^{\omega}$, $n\in\N$, and $D\subseteq \SSS^{\omega}$ be an
open dense set.
Then there is a condition $q\leq_{n,n} p$ such that $q_\sigma\in D$ for all
$\sigma\in\lev(n,n,q)$.
\end{lemma}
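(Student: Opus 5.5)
We will prove Lemma~\ref{lem:Fndec} by a fusion-style argument over the finitely many nodes of $\lev(n,n,p)$, following Baumgartner's Lemmas~1.5--1.8 of \cite{failure}. The set $\lev(n,n,p)$ consists of the functions $\sigma$ assigning to each coordinate $i<n$ a node of the $n$-th splitting level of the Sacks tree $p(i)$. The restriction $p_\sigma$ is obtained by replacing $p(i)$ with the subtree of nodes compatible with $\sigma(i)$, for $i<n$. This set is finite, of size at most $2^{n\cdot n}$. Enumerate it as $\sigma_0,\dots,\sigma_{k-1}$.

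We build a decreasing sequence $p=q^0\geq_{n,n} q^1\geq_{n,n}\dots\geq_{n,n} q^k$. At step $j$ we look at $q^j_{\sigma_j}$, which is a condition below $p_{\sigma_j}$, and use density of $D$ to pick $r\leq q^j_{\sigma_j}$ with $r\in D$. We then amalgamate. On each coordinate $i<n$, define $q^{j+1}(i)$ by replacing the part of $q^j(i)$ above the node $\sigma_j(i)$ with $r(i)$, and leaving the parts above the other $n$-th splitting nodes unchanged. On coordinates $i\geq n$, set $q^{j+1}(i)=r(i)$. This is legitimate because $r\leq q^j_{\sigma_j}$ and $q^j_{\sigma_j}(i)=q^j(i)$ for $i\geq n$.

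To see that $q^{j+1}\leq_{n,n}q^j$, one has to arrange that the first $n$ splitting levels on coordinates $<n$ are preserved. For this we first shrink $r(i)$ so that its stem extends $\sigma_j(i)$ and does not cut below it. This is harmless, since $r\leq q^j_{\sigma_j}$ already forces its trunk through $\sigma_j(i)$; replacing the cone above $\sigma_j(i)$ by a subtree then keeps $\sigma_j(i)$ as an $n$-th splitting node, and leaves the lower splitting nodes untouched. The other coordinates $i\geq n$ impose no constraint in $\leq_{n,n}$ beyond $q^{j+1}(i)\leq q^j(i)$.

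Finally, put $q=q^k$. For each $j$, the level $\lev(n,n,q)$ corresponds canonically to $\lev(n,n,q^j)$, and $q_{\sigma_j}\leq q^{j+1}_{\sigma_j}$. Unwinding the amalgamation, $q^{j+1}_{\sigma_j}$ equals $r$ restricted appropriately, which lies in $D$. Since $D$ is open, $q_{\sigma_j}\in D$ for every $j$.

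The main obstacle is the bookkeeping in the amalgamation step. Once we strengthen coordinates $i\geq n$ globally to $r(i)$, the earlier $q_{\sigma_{j'}}$ for $j'<j$ change as well, so we must check that they remain in $D$. They do, because they only get stronger and $D$ is open. We also need countable supports to be preserved, which holds since finitely many unions of countable supports are countable. With this observation the argument is a direct finite induction, with no genuine fusion over $\omega$ required.
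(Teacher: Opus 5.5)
Your proof is correct and is the standard finite amalgamation argument behind Baumgartner's Lemmas 1.5--1.8, which the paper cites rather than reproving. For each of the finitely many $\sigma_j$ you graft some $r\in D$ below $q^j_{\sigma_j}$ into the $\sigma_j$-cones on the coordinates $i<n$, take $r(i)$ on the remaining coordinates, and use openness of $D$ to keep the earlier pieces in $D$.

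The one thing to fix is your description of the nodes $\sigma(i)$. You claim that grafting keeps $\sigma_j(i)$ as an $n$-th splitting node, but grafting an arbitrary $r(i)$ above a genuine splitting node need not keep that node splitting. The nodes $\sigma(i)$ should instead be the ones sitting just above the first $n$ splitting levels, which are unchanged under $\leq_{n,n}$. What the graft does preserve is $\sigma_j(i)$ as a node together with every splitting node below it, and that is exactly what $\leq_{n,n}$ requires. For the same reason, no preliminary shrinking of $r(i)$ is needed.
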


\subsection{Centers}
Let $(E_n)_{n\in\N}$ be a sequence of finite nonempty sets of ordinal numbers.
We write $\lim_{n\to\infty}E_n=\beta$, for some ordinal number $\beta$, if $\max E_n\leq
\beta$ for all $n\in\N$ and $\lim_{n\to\infty}\min E_n=\beta$. So in
particular $\lim_{n\to\infty}E_n=\beta$ if $E_n=\{\beta\}$ for each $n\in\N$.

\begin{definition}\label{dfn:Scenter}
Let $p\in\SSS^{\omega}$, $\dot{x}$ be a name for an element of $\ell_1(\kappa)$, and
$S\subseteq\kappa$ be a countable set.
A set $\cY\subseteq \ell_1(\kappa)$ is an \emph{$S$-center for $(p,\dot{x})$} if there are 
\begin{enumerate}
\item an increasing sequence $(m_n)_{n\in\N}$ of positive natural numbers,
\item sets $\cX(S, m_n, A_\sigma, \I_\sigma)$ 
for all $\sigma\in \lev(n, n, p)$ and $n\in \N$,
\item a sequence $(E_n)_{n\in\N}$ of finite nonempty subsets of $S$ with
$\lim_{n\to\infty}E_n=\sup S$,
\item a sequence $(\epsilon_n)_{n\in\N}$ of positive rational numbers,
\end{enumerate}

such that

\begin{equation}
\cY=\bigcap_{n\in \N}\bigcup_{\sigma\in \lev(n, n, p)}\cX(S, m_n, A_\sigma,
\I_\sigma),
\end{equation}
\begin{equation}
p\forces\dot{x}\in \cY
\end{equation}
\begin{equation}
\label{eq:Eeps}
\forall y\in \cY\ \forall n\in\N\ \exists \alpha\in E_n\ |y(\alpha)|>\epsilon_n.
\end{equation}

A set is an \emph{$S$-center} if it is an $S$-center for some $(p,\dot{x})$ with
$p\in\SSS^{\omega}$ and $\SSS^{\omega}$-name $\dot{x}$.
We say that an $S$-center is \emph{witnessed} by the objects listed in items~(1)--(4).

If $\cY$ is an $S$-center with $\diam(\cY)<1$ and $\alpha\in\kappa\setminus S$, then
$\cX:=\cY+1_\alpha$ is called a \emph{translated center}. For such a representation we write
$S_{\cX}:=S$, $\cY_{\cX}:=\cY$, and $\alpha_{\cX}:=\alpha$.
\end{definition}

In applications, we may specify only the relevant witnesses.

\begin{lemma}\label{lem:Scenter}
Using the notation from Definition~\ref{dfn:Scenter}, the following assertions hold.
\begin{enumerate}
\item $\supp(\cY)\subseteq S$.
\item An $S$-center is a nonempty norm-compact set in $\ell_1(\kappa)$.
\item For a fixed countable set $S\subseteq\kappa$ there are at most $2^\omega$ many $S$-centers.
\end{enumerate}
\end{lemma}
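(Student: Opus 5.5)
We verify the three assertions separately, working directly from the representation
$$
\cY=\bigcap_{n\in \N}\bigcup_{\sigma\in \lev(n, n, p)}\cX(S, m_n, A_\sigma,\I_\sigma)
$$
given in Definition~\ref{dfn:Scenter}.

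For (1), every element of $\cY$ lies in some $\cX(S,m_0,A_\sigma,\I_\sigma)$. Condition (1) of Definition~\ref{df:basic} then gives support contained in $S$.

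For (2), nonemptiness comes from $p\forces\dot{x}\in\cY$. In the extension by a generic filter containing $p$, the interpretation of $\cY$ is nonempty. The statement ``$\cY\neq\emptyset$'' concerns a coded closed subset of the Polish space $\ell_1(S)$ with countable $S$, so it is $\Sigma^1_1$. Mostowski absoluteness therefore transfers nonemptiness to the ground model, or to whatever model we are working in. Alternatively, one can argue combinatorially by choosing a branch through the fusion tree of $p$ and intersecting the corresponding nested sets; I would use the absoluteness route, as the paper announces.

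For compactness, each level $\lev(n,n,p)$ is finite, so each union $\bigcup_{\sigma}\cX(S,m_n,A_\sigma,\I_\sigma)$ is a finite union of closed sets. It is closed by Lemma~\ref{intpro-closed}, and hence $\cY$ is closed in the complete space $\ell_1(\kappa)$. It remains to show that $\cY$ is totally bounded. Given $\varepsilon>0$, pick $n$ with $3/m_n<\varepsilon$; this is possible because $(m_n)$ is increasing in $\N_+$ and hence tends to infinity. By Lemma~\ref{fct:IBm}, $\cY$ is covered by finitely many sets $\cX(S,m_n,A_\sigma,\I_\sigma)$, each of diameter at most $3/m_n<\varepsilon$. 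Picking one point from each nonempty piece gives a finite $\varepsilon$-net. Closed and totally bounded in a complete metric space implies compact. The only delicate point is to check that $\lev(n,n,p)$ is finite: it consists of functions from $n$ coordinates into $2^n$, in Baumgartner's fusion sense.

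For (3), fix $S$. An $S$-center is determined by the data used in its representation: the sequence $(m_n)$ and the assignment, for each $n$, of a finite family of sets $\cX(S,m_n,A,\I)$ indexed by a finite level set. By Lemma~\ref{ctbl-approx} there are only countably many sets $\cX(S,m,A,\I)$. The displayed intersection depends only on the countable sequence of finite sets of such codes, namely $n\mapsto\{\cX(S,m_n,A_\sigma,\I_\sigma):\sigma\in\lev(n,n,p)\}$, and not on $p$, $\dot x$, $E_n$ or $\epsilon_n$. So the number of $S$-centers is at most the number of functions from $\N$ into the finite subsets of a countable set, that is $\omega^\omega=2^\omega$.

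I expect no real obstacle. The only care needed is the bookkeeping in (3): making sure the set $\cY$ is a function of countable ground-model data alone, so that the possibly proper-class range of names $\dot x$ does not enter the count.
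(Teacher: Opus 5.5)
Your proposal is correct and follows the paper's proof. Items (1) and (3) come directly from the definition and Lemma~\ref{ctbl-approx}, nonemptiness from $\Sigma^1_1$ absoluteness, and compactness from closedness plus total boundedness via Lemma~\ref{fct:IBm} and $m_n\to\infty$. (Your aside about a combinatorial proof via a branch through the fusion tree is not directly available from Definition~\ref{dfn:Scenter}, since that definition neither makes the pieces at successive levels nested nor ties $\cX(S,m_n,A_\sigma,\I_\sigma)$ to $p_\sigma$, so you are right to rely on absoluteness.)
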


\begin{proof}
Items~(1) and~(3) follow directly from the definition and Lemma~\ref{ctbl-approx}.
We prove~(2).
By~(1) and Lemma~\ref{intpro-closed}, the set $\cY$ is norm-closed
in $\ell_1(S)$, and thus in $\ell_1(\kappa)$.
Since $p\forces \dot{x}\in \cY$, nonemptiness follows from  the absoluteness discussed
at the beginning of this section.
The set $\cY$ is compact since it is closed in a complete metric space and totally bounded by 
Definition \ref{dfn:Scenter} and Lemma \ref{fct:IBm} (Theorem 4.3.29 of \cite{engelking}).
\end{proof}

\begin{lemma}\label{lem:IBmX}
Assume that $p\in \SSS^{\omega}$, $\dot{x}$ is an $\SSS^{\omega}$-name for an
element of $\ell_1(\kappa)$, $S\subseteq \kappa$ is a countable set, and $\cX$ is either $\cX_0=\{1_0\}$ or a
translated center, such that 
$$
p\forces \dot{x}\neq0\ \wedge\ \supp(\dot{x})\subseteq S\ \wedge\ \sup\supp(\dot{x})=\sup S\ \wedge\ 
\dist(\dot{x}, \cX)>1.
$$
Let $n\in\N$, $k\in\N_+$, and let $\tilde{\alpha}\leq\sup S$ be such that
$p\forces\supp(\dot{x})\cap[\tilde{\alpha},\kappa)\neq\emptyset$.
Then there is a condition $q\leq_{n,n} p$ and sets $\cX(S, m, A_\sigma, \I_\sigma)$
for all $\sigma\in \lev(n, n, q)$ such that
\begin{equation}
m>k,
\end{equation}
\begin{equation}\label{eq:maxA}
\max A_\sigma\geq\tilde{\alpha}\text{ for all }\sigma\in\lev(n,n,q),
\end{equation}
\begin{equation}
q\forces \dot{x} \in \bigcup_{\sigma\in \lev(n,n,q)}\cX(S,m,A_\sigma,\I_\sigma),
\end{equation}
\begin{equation}\label{eq:dist}
\dist\Big(\bigcup_{\sigma\in \lev(n,n,q)}\cX(S,m,A_\sigma,\I_\sigma),\cX\Big)>1.
\end{equation}

In particular, there are a finite nonempty set $E\subseteq S\cap[\tilde{\alpha},\kappa)$ and
$\epsilon\in\Q_+$ such that
\begin{equation}
\forall y\in \bigcup_{\sigma\in\lev(n,n,q)}\cX(S,m,A_\sigma,\I_\sigma)\ 
\exists \alpha\in E\ |y(\alpha)|>\epsilon.
\end{equation}
\end{lemma}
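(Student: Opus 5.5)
We will follow the standard fusion pattern: use the $(F,n)$-decision of Lemma~\ref{lem:Fndec} to reduce to finitely many branches, and handle each branch with Lemma~\ref{lem:IBm}. The $m$ must be uniform over all branches, and the distance condition must be built into the choice of the approximating sets.

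First we fix the uniform parameter. Since $\cX$ is compact ($\cX_0$ trivially, and a translated center is a translate of an $S$-center, compact by Lemma~\ref{lem:Scenter}), the statement $\dist(\dot{x},\cX)>1$ is strict. By strengthening we could find a rational $\delta>0$ with $\dist(\dot x,\cX)>1+\delta$, but this would have to happen branchwise. Instead we let $D$ be the set of conditions $r\le p$ for which there are $m_r>k$, a set $\cX(S,m_r,A_r,\I_r)$ with $\max A_r\geq\tilde\alpha$, and a rational $\delta_r>0$ such that
\[
r\forces \dot x\in\cX(S,m_r,A_r,\I_r),\qquad r\forces \dist(\dot x,\cX)>1+\delta_r,\qquad 3/m_r<\delta_r .
\]
This $D$ is open dense below $p$. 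Given $r'\le p$, first strengthen to decide a rational $\delta$ with $\dist(\dot x,\cX)>1+\delta$. Then choose $m>\max(k,3/\delta)$ and apply Lemma~\ref{lem:IBm} with this $m$ and $\tilde\alpha$.

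Next we apply Lemma~\ref{lem:Fndec} to $D$, which gives $q\le_{n,n}p$ with $q_\sigma\in D$ for every $\sigma\in\lev(n,n,q)$. Since the level set is finite, we set $m=\max_\sigma m_{q_\sigma}$.

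To make all branches use the same $m$, we argue that $\cX(S,m',A,\I)\supseteq\cX(S,m,A,\I)$ fails when $m>m'$, so the sets cannot simply be reused. The main obstacle is this uniformization. The cleanest fix is to build the uniform $m$ into $D$ via an extra step: first fuse to decide the $\delta_\sigma$, let $\delta=\min_\sigma\delta_\sigma$, and fix $m>\max(k,3/\delta)$. Then apply Lemma~\ref{lem:Fndec} a second time to the dense set of conditions deciding membership in some $\cX(S,m,A,\I)$ with $\max A\ge\tilde\alpha$ (Lemma~\ref{lem:IBm}), while keeping the $\delta$-bound, which is inherited by extensions. Since $\bigcup_\sigma q_\sigma$ is $q$, we obtain $q\forces\dot x\in\bigcup_\sigma\cX(S,m,A_\sigma,\I_\sigma)$.

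For the distance bound \eqref{eq:dist}, each $\cX(S,m,A_\sigma,\I_\sigma)$ is nonempty and contains a point forced to be $\dot x$ by $q_\sigma$. By the absoluteness discussed at the start of the section, there is a ground-model point $y_0$ of it with $\dist(y_0,\cX)\ge1+\delta$. Indeed, $\dist(\cdot,\cX)>1+\delta$ together with membership is a $\Sigma^1_1$ statement with parameters among the codes, so it holds in $V$. By Lemma~\ref{fct:IBm} every $y$ in the set satisfies $\|y-y_0\|\le3/m<\delta$, hence $\dist(y,\cX)>1$. Compactness of $\cX$ and finiteness of the union then give a strict inequality for the distance between the sets.

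Finally, for the last assertion we take $E=\{\max A_\sigma:\sigma\in\lev(n,n,q)\}\subseteq S\cap[\tilde\alpha,\kappa)$. We let $\epsilon$ be a positive rational below $\min_\sigma\min\{|t|:t\in I_{\sigma,\max A_\sigma}\}$, which is positive since $0\notin\bigcup\I_\sigma$ and the intervals are closed. Every $y$ in the $\sigma$-th set then satisfies $|y(\max A_\sigma)|>\epsilon$.
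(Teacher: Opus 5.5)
Your final argument, after you drop the branch-dependent $m_r$, is essentially the paper's proof. It has the same four steps: a first use of Lemma~\ref{lem:Fndec} to get $p'\leq_{n,n}p$ forcing a uniform margin $\dist(\dot x,\cX)>1+\delta$ and to fix one $m>\max(k,3/\delta)$; a second use, with Lemma~\ref{lem:IBm}, to get $q\leq_{n,n}p'$ with $q_\sigma\forces\dot x\in\cX(S,m,A_\sigma,\I_\sigma)$; Lemma~\ref{fct:IBm} with the triangle inequality for \eqref{eq:dist}; and $E=\{\max A_\sigma:\sigma\in\lev(n,n,q)\}$ for the last assertion.

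One step is misjustified, although your own computation already contains the repair. Strictness in \eqref{eq:dist} does not follow from compactness of $\cX$ and finiteness of the union. The sets $\cX(S,m,A_\sigma,\I_\sigma)$ are not compact, and a closed set all of whose points lie at distance $>1$ from a compact set can still be at distance exactly $1$ from it; take $\{0\}$ and $\{(1+\frac1j)1_j:j\in\N_+\}$. What gives strictness is the uniform estimate $\dist(y,\cX)\geq\dist(y_0,\cX)-\|y-y_0\|\geq 1+\delta-\frac3m$ for every $y$ in the $\sigma$-th set. Hence the distance is at least $1+\delta-\frac3m>1$.

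The paper gets such a uniform bound without first producing a ground-model witness $y_0$ by downward absoluteness. For ground-model $y\in\cX$ and $z\in\cX(S,m,A_\sigma,\I_\sigma)$, it uses $\dot x$ itself as the intermediate point: $q_\sigma\forces\|y-z\|\geq\|\dot x-y\|-\|\dot x-z\|\geq 1+\frac4m-\frac3m$. An inequality between ground-model points that is forced by a condition holds outright.

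Finally, your side remark on inclusions is reversed. For $m>m'$ one does have $\cX(S,m,A,\I)\subseteq\cX(S,m',A,\I)$. It is the opposite inclusion that fails, and that is why the sets cannot be reused when raising $m$ to $\max_\sigma m_{q_\sigma}$.
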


\begin{proof}
Since $p\forces\dot{x}\neq0$, by changing the value of the name $\dot{x}$ outside $p$,
we may assume that $\SSS^\omega\forces\dot{x}\neq0$.
By Lemma~\ref{lem:Fndec} and the finiteness of $\lev(n,n,p)$, there is
$m>k$ and a condition $p'\leq_{n,n} p$ such that
\begin{equation}\label{eq:p'}
p'\forces \dist(\dot{x}, \cX)\geq 1+\frac{4}{m}.
\end{equation}
By Lemmata~\ref{lem:IBm} and~\ref{lem:Fndec}, there is a condition $q\leq_{n,n} p'$ and sets
$\cX(S, m, A_\sigma, \I_\sigma)$, for $\sigma\in\lev(n,n,q)$, such that $\max
A_\sigma\geq\tilde{\alpha}$ and
\begin{equation}\label{eq:qsigma}
q_\sigma\forces \dot{x}\in \cX(S, m, A_\sigma, \I_\sigma)
\end{equation}
for all $\sigma\in \lev(n,n,q)$.
Since $\lev(n,n,q)=\lev(n,n,p)$ and $\{q_\sigma:\sigma \in\lev(n,n,q)\}$ is a maximal
antichain below $q$, we have 
$$
q\forces \dot{x}\in \bigcup_{\sigma\in\lev(n,n,q)}\cX(S, m, A_\sigma, \I_\sigma).
$$

Fix $y\in \cX$, $\sigma\in\lev(n,n,q)$, and 
$z\in \cX(S,m,A_\sigma,\I_\sigma)$.
Applying~\eqref{eq:p'} with $q_\sigma\leq p'$ and Lemma~\ref{fct:IBm} with~\eqref{eq:qsigma} we
have
$$
q_\sigma\forces  \|\dot{x}-y\|\ge 1+\frac{4}{m}\quad\text{and}\quad q_\sigma\forces
\|\dot{x}-z\|\le \frac{3}{m}.
$$
It follows that 
$$
q_\sigma\forces
\|y-z\|
\ge \|\dot{x}-y\|-\|\dot{x}-z\|
\ge 1+\frac{4}{m}-\frac{3}{m}
=1+\frac{1}{m}.
$$
Thus, \eqref{eq:dist} holds.
For the last assertion
put $E:=\{\max A_\sigma:\sigma\in\lev(n,n,q)\}$
and choose 
$\epsilon\in\Q_+$ such that
$$
(-\epsilon,\epsilon)\cap \bigcup_{\sigma\in\lev(n,n,q)}\bigcup \I_\sigma=\emptyset.\qedhere
$$
\end{proof}

\begin{lemma}\label{lem:IBmXn}
Assume that $p\in \SSS^\omega$, $\dot{x}$ is an $\SSS^\omega$-name for an element 
of $\ell_1(\kappa)$, $S\subseteq \kappa$ is a countable set, and $\cX_n$ is either $\cX_0=\{1_0\}$ or a
translated center for every $n\in\N$, such that
$$
p\forces \dot{x}\neq0\ \wedge\ \supp(\dot{x})\subseteq S\
 \wedge\ \sup\supp(\dot{x})=\sup S\ \wedge\
\forall n\in\N\ \dist(\dot{x},\cX_n)>1.
$$
Then there are a condition $q\leq p$ and an $S$-center $\cY$ for $(q,\dot{x})$ such that
$\dist(\cY,\cX_n)>1$ for all $n\in\N$ and $\diam(\cY)<1$.
\end{lemma}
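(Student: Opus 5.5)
We build a fusion sequence $p=p_0\geq_{0,0} p_1\geq_{1,1} p_2\geq\dots$ by repeatedly applying Lemma~\ref{lem:IBmX}, and take $q$ to be the fusion limit. The point of using $\leq_{n,n}$ steps is that $\lev(n,n,p_{n+1})=\lev(n,n,q)$, so sets attached to the levels of $p_{n+1}$ remain attached to the levels of $q$.

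First fix a sequence $\tilde\alpha_n$ of ordinals converging to $\sup S$ with $\tilde\alpha_n\leq\sup S$ and $p\forces\supp(\dot x)\cap[\tilde\alpha_n,\kappa)\neq\emptyset$. If $\sup S\in S$, take $\tilde\alpha_n=\sup S=\max S$; this is legitimate since $p$ forces $\sup\supp(\dot x)=\sup S$, so $\max S\in\supp(\dot x)$. Otherwise take an increasing cofinal sequence in $\sup S$; each such $\tilde\alpha_n$ works because $\sup\supp(\dot x)=\sup S>\tilde\alpha_n$ is forced.

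At stage $n$ we have $p_n$ and apply Lemma~\ref{lem:IBmX} with the data $p_n$, $n$, $\tilde\alpha_n$, the set $\cX:=\cX_{j}$ for every $j\leq n$, and $k:=\max(m_{n-1},6)$. To handle all $j\leq n$ at once, we iterate the lemma finitely many times at the same level $n$. The conclusions persist under further $\leq_{n,n}$-strengthening: the union of the finitely many pieces is kept, and forcing into an intersection of unions of pieces refines to one piece per $\sigma$. A simpler route is to use Lemma~\ref{lem:Fndec} to pick $m$ with $q_\sigma\forces\dist(\dot x,\cX_j)\geq 1+4/m$ for all $j\leq n$ simultaneously, then repeat the proof of Lemma~\ref{lem:IBmX} once. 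This yields $p_{n+1}\leq_{n,n}p_n$, an $m_n>m_{n-1}$, pieces $\cX(S,m_n,A_\sigma,\I_\sigma)$ for $\sigma\in\lev(n,n,p_{n+1})$, and a finite set $E_n\subseteq S\cap[\tilde\alpha_n,\kappa)$ with $\epsilon_n$ satisfying \eqref{eq:Eeps} on the union $U_n$ of these pieces. Moreover $p_{n+1}\forces\dot x\in U_n$, and $\dist(U_n,\cX_j)>1$ for $j\leq n$.

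Let $q$ be the fusion of $(p_n)$, so that $\lev(n,n,q)=\lev(n,n,p_{n+1})$, and set $\cY=\bigcap_n U_n$. Then $q\forces\dot x\in\cY$. The $E_n$ satisfy $\lim E_n=\sup S$, since $\min E_n\geq\tilde\alpha_n\to\sup S$ and $E_n\subseteq S$ forces $\max E_n\leq\sup S$. Hence $\cY$ is an $S$-center for $(q,\dot x)$ in the sense of Definition~\ref{dfn:Scenter}. For every $j$ we have $\dist(\cY,\cX_j)\geq\dist(U_n,\cX_j)$ for any $n\geq j$, and this exceeds $1$.

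The delicate points are strictness and the diameter bound. For strictness, the proof of Lemma~\ref{lem:IBmX} actually gives $\dist(U_n,\cX_j)\geq 1+1/m_n>1$, and this bound passes to the subset $\cY$. For the diameter, each piece at level $0$ has diameter at most $3/m_0\leq 1/2$ by Lemma~\ref{fct:IBm}, and $\lev(0,0,q)$ is a singleton, so $U_0$ is a single piece. Hence $\diam(\cY)\leq\diam(U_0)<1$; this is why we chose $k\geq 6$.

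The main obstacle is the bookkeeping of countably many $\cX_j$ while keeping the $\leq_{n,n}$ fusion. We resolve it by handling only $\cX_0,\dots,\cX_n$ at stage $n$, which works because every $\cX_j$ is eventually handled and distance bounds pass to the subset $\cY$.
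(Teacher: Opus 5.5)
Your overall plan matches the paper's: a $\leq_{n,n}$-fusion driven by Lemma~\ref{lem:IBmX}, $\cY$ taken as the intersection of the level-$n$ unions $U_n$, and the diameter bound read off from the single level-$0$ piece. There is, however, an error in your choice of $(\tilde\alpha_n)_{n\in\N}$. You split cases according to whether $\sup S\in S$, and in that case you assert $p\forces\max S\in\supp(\dot x)$. This fails when $\max S$ is a limit of $S\cap\max S$. Take $S=\omega\cup\{\omega\}$, all $\cX_n=\cX_0$, and the ground-model vector $\dot x=\sum_{n\geq1}2^{-n}1_n$. Every hypothesis of the lemma holds (note $\dist(\dot x,\cX_0)=2$), yet $\dot x(\omega)=0$. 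With $\tilde\alpha_n=\omega$, the hypothesis $p\forces\supp(\dot x)\cap[\tilde\alpha_n,\kappa)\neq\emptyset$ of Lemma~\ref{lem:IBmX} is false. Moreover, no $E_n\subseteq S\cap[\omega,\kappa)=\{\omega\}$ can satisfy \eqref{eq:Eeps} on a set containing $\dot x$, so your construction cannot produce an $S$-center here.

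The repair is the paper's case distinction. If $S\setminus\{\sup S\}$ is nonempty and cofinal in $\sup S$, take $\tilde\alpha_n<\sup S$ cofinal in $\sup S$. Your argument for the case $\sup S\notin S$ then works verbatim, since it only uses that $\sup\supp(\dot x)=\sup S>\tilde\alpha_n$ is forced. Use the constant sequence $\sup S$ only when $S\setminus\{\sup S\}$ is empty or bounded below $\sup S$; in that case $\sup\supp(\dot x)=\sup S$ does force $\sup S\in\supp(\dot x)$.

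A secondary remark: handling all of $\cX_0,\dots,\cX_n$ at stage $n$ is unnecessary. Applying Lemma~\ref{lem:IBmX} to $\cX_n$ alone at stage $n$ already gives $\dist(\cY,\cX_n)\geq\dist(U_n,\cX_n)>1$, because $\cY\subseteq U_n$. This is how the paper proceeds, and your own closing sentence is exactly this argument. Of your two routes for the simultaneous version, the second is fine: choose one $m$ via Lemma~\ref{lem:Fndec} with $\dist(\dot x,\cX_j)\geq1+4/m$ for all $j\leq n$, then rerun the proof once. The first route, iterating Lemma~\ref{lem:IBmX}, is not justified as written. Successive applications produce pieces with different parameters $m$, and each family is controlled only against its own $\cX_j$. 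An intersection of basic sets $\cX(S,m,A,\I)$ is in general not a basic set, so you do not obtain a single level-$n$ union of the shape required in Definition~\ref{dfn:Scenter}. Your separate worry about strictness is moot: $\cY\subseteq U_n$ together with $\dist(U_n,\cX_n)>1$ already gives the strict inequality.
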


\begin{proof}

Fix a sequence $(\tilde{\alpha}_n)_{n\in\N}$ of ordinals such that
$\tilde{\alpha}_n\leq\sup S$ and
$p\forces\supp(\dot{x})\cap[\tilde{\alpha}_n,\kappa)\neq\emptyset$ for every $n\in\N$, and,
for every sequence $(E_n)_{n\in\N}$ of finite nonempty sets with $E_n\subseteq
S\cap[\tilde{\alpha}_n,\kappa)$, we have $\lim_{n\to\infty}E_n=\sup S$.
Such a sequence exists: if $S\setminus\{\sup S\}$ is nonempty and cofinal in $\sup S$, take
$(\tilde{\alpha}_n)_{n\in\N}$ cofinal in $\sup S$ with $\tilde{\alpha}_n<\sup S$; otherwise take
the constant sequence $\tilde{\alpha}_n=\sup S$, in which case the assumptions imply
$p\forces\sup S\in\supp(\dot{x})$.
Applying Lemma~\ref{lem:IBmX}, define a fusion sequence 
$((q_n,n,m_n))_{n\in\N}$, where $(m_n)_{n\in\N}$ is an increasing
sequence of positive natural numbers with $m_0>3$, sets $\cX(S,m_n,A_\sigma,\I_\sigma)$
for all $\sigma\in\lev(n,n,q_n)$, finite nonempty sets $E_n\subseteq 
S\cap[\tilde{\alpha}_n,\kappa)$,
and positive rational numbers $\epsilon_n$ such that $q_0=p$, $q_{n+1}\leq_{n,n}q_n$,
$$
q_{n+1}\forces \dot{x}\in
\bigcup_{\sigma\in\lev(n,n,q_n)}\cX(S,m_n,A_\sigma,\I_\sigma),
$$
$$
\dist\Big( \bigcup_{\sigma\in \lev(n, n, q_n)}\cX(S, m_n, A_\sigma, \I_\sigma)\,,\,
\cX_n\Big)> 1,
$$
and
$$
\forall y\in \bigcup_{\sigma\in\lev(n,n,q_n)}\cX(S,m_n,A_\sigma,\I_\sigma)\ 
\exists \alpha\in E_n\ |y(\alpha)|>\epsilon_n
$$
for all $n\in\N$.
Put $q:=\bigcap_{n\in\N_+}q_n$.
Since $q\leq_{n,n}q_n$ for every $n\in\N$, we have $\lev(n,n,q)=\lev(n,n,q_n)$.
Let
$$
\cY:=\bigcap_{n\in \N}\bigcup_{\sigma\in \lev(n, n, q)}\cX(S, m_n, A_\sigma,
\I_\sigma).
$$
Then $q\leq p$ and $q\forces \dot{x}\in \cY$.
Moreover, by the choice of $(\tilde{\alpha}_n)_{n\in\N}$, we have
$\lim_{n\to\infty}E_n=\sup S$.
Thus $\cY$ is an $S$-center for $(q,\dot{x})$.
Finally, for every $n\in\N$,
$$
\cY\subseteq \bigcup_{\sigma\in \lev(n,n,q)}\cX(S,m_n,A_\sigma,\I_\sigma)\subseteq
\bigcup_{\sigma\in \lev(n,n,q_n)}\cX(S,m_n,A_\sigma,\I_\sigma),
$$
and hence $\dist(\cY,\cX_n)>1$.

Since $\lev(0,0,q)=\{\emptyset\}$, we have $\cY\subseteq
\cX(S,m_0,A_\emptyset,\I_\emptyset)$.
Since $m_0>3$, Lemma~\ref{fct:IBm} yields $\diam(\cY)\leq\frac{3}{m_0}<1$.
\end{proof}

\subsection{The auxiliary forcing $\mathbb{X}$}

In this subsection we work with  $\kappa=\omega_2$.

\begin{definition}
Let $\mathbb{X}$ be the set consisting of all nonempty countable families $\X$ such that
\begin{enumerate}
\item $\cX_0=\{1_0\}\in\X$, and each $\cX\in\X\setminus\{\cX_0\}$ is a translated center,
\item for each $\cX\in\X\setminus\{\cX_0\}$, we have $\sup S_{\cX}<\alpha_{\cX}<\sup S_{\cX}+\omega_1$,
\item $\alpha_{\cX}\neq\alpha_{\cZ}$ for distinct $\cX,\cZ\in \X\setminus\{\cX_0\}$,
\item $\dist(\cX,\cZ)>2$ for distinct $\cX,\cZ\in \X$.
\end{enumerate}
For $\X,\X'\in\mathbb{X}$ we write $\X\leq \X'$ if
$\X\supseteq \X'$.
\end{definition}

\begin{lemma}\label{lem:sigma}
The forcing $\mathbb{X}$ is $\sigma$-closed.
\end{lemma}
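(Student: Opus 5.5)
The plan is to show that if $(\X_n)_{n\in\N}$ is a decreasing sequence in $\mathbb{X}$, i.e.\ $\X_0\subseteq\X_1\subseteq\cdots$, then the union $\X:=\bigcup_{n\in\N}\X_n$ belongs to $\mathbb{X}$. Since the order is reverse inclusion, $\X$ then extends every $\X_n$ and is a lower bound of the sequence.

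The verification goes clause by clause through the definition of $\mathbb{X}$.
\begin{enumerate}
\item $\X$ is nonempty, and it is countable as a countable union of countable families.
\item Clause (1) holds because $\cX_0\in\X_0\subseteq\X$. Every other member of $\X$ lies in some $\X_n$, so it is a translated center.
\item Clause (2) is a property of a single member $\cX$ together with its representation. It is inherited directly from the $\X_n$ containing $\cX$.
\item Clauses (3) and (4) concern pairs of distinct elements $\cX,\cZ\in\X$. Say $\cX\in\X_i$ and $\cZ\in\X_j$. Since the sequence is increasing under inclusion, both lie in $\X_{\max(i,j)}$. That family satisfies $\alpha_{\cX}\neq\alpha_{\cZ}$ (when neither is $\cX_0$) and $\dist(\cX,\cZ)>2$.
\end{enumerate}

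One subtlety concerns the representations. The objects $S_{\cX}$, $\cY_{\cX}$, $\alpha_{\cX}$ are attached to the coded representation of each translated center. To keep this coherent, I would treat members of $\X$ as codes, or as sets with a fixed chosen representation. Any given member then carries the same representation in every $\X_n$ to which it belongs, and its representation in the union is unambiguous.

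No absoluteness or forcing argument is needed here. All conditions are checked in the ground model, where the sequence lives, and every clause is either pointwise or pairwise. The only step needing any care is the pairwise clauses, and directedness of the chain handles them. If the lemma is meant to cover arbitrary countable descending sequences $\X_0\geq\X_1\geq\cdots$, the argument above is exactly that case.
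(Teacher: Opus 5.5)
Your proposal is correct and is essentially the intended argument. The paper states Lemma~\ref{lem:sigma} without proof, and the evident reason is exactly yours: the union of a countable $\subseteq$-increasing chain of conditions is again a condition, since clauses (1)--(2) are checked member by member and clauses (3)--(4) hold because any two members already lie in a common condition of the chain. Your treatment of translated centers as codes with fixed representations also agrees with the coding conventions fixed at the start of Section~\ref{sec:sacks}.
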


\begin{lemma}[{\sf CH}]\label{lem:kapcc}
The forcing $\mathbb{X}$ is $\omega_2$-cc.
\end{lemma}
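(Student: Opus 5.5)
The plan is to use a $\Delta$-system argument with {\sf CH} counting the isomorphism types of conditions. Then I check directly that two conditions of the same type, lying in the right order position, have a union belonging to $\mathbb{X}$. That union is a common extension, so incompatibility fails.

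\textbf{Setup and thinning.} Let $\{\X_\xi:\xi<\omega_2\}\subseteq\mathbb{X}$. Put
$$T_\xi=\{0\}\cup\bigcup\{S_{\cX}\cup\{\alpha_{\cX}\}:\cX\in\X_\xi\setminus\{\cX_0\}\},$$
which is a countable subset of $\omega_2$. Under {\sf CH} we have $\omega_1^\omega=\omega_1<\omega_2$, so the $\Delta$-system lemma for countable sets applies. Passing to a subfamily, I assume the $T_\xi$ form a $\Delta$-system with root $R$. Thinning further, using the regularity of $\omega_2$ and that $\sup R<\omega_2$, I arrange:
\begin{itemize}
\item $R$ is an initial segment of every $T_\xi$;
\item for $\eta<\xi$, every element of $T_\xi\setminus R$ exceeds $\sup T_\eta+\omega_1$ and $\sup R+\omega_1$.
\end{itemize}

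Next I code each $\X_\xi$ as a countable object: transport every translated center along the order isomorphism of $T_\xi$ onto an ordinal $\delta_\xi<\omega_1$. A translated center is determined by its ground-model code, namely the witnesses of Definition~\ref{dfn:Scenter} and $\alpha_{\cX}$, and by Lemma~\ref{lem:Scenter} there are at most $2^\omega$ codes per support. So under {\sf CH} there are only $\omega_1^\omega\cdot 2^\omega=\omega_1$ possible types. Hence I can find $\eta<\xi$ with the same type and the same position of $R$ inside it. The order isomorphism $\pi:T_\eta\to T_\xi$ then fixes $R$ pointwise and maps $\X_\eta$ onto $\X_\xi$, with $\pi(\cX)$ being the translated center $\cX$ with all its coordinates renamed by $\pi$.

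\textbf{Checking the union.} I claim $\X_\eta\cup\X_\xi\in\mathbb{X}$. Conditions (1) and (2) are inherited. For (3), $\alpha$'s lying in $R$ are shared by corresponding centers, and those outside $R$ come from disjoint sets, so no clash arises. The real work is (4).

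Take $\cX\in\X_\eta$ and $\cZ\in\X_\xi$ with $\cX\neq\cZ$, and set $\cZ'=\pi^{-1}(\cZ)\in\X_\eta$. Supports outside $R$ are disjoint. So for $x\in\cX$, $z\in\cZ$ and $z'=\pi^{-1}(z)$ we get
$$\|x-z\|=\|(x-z)|R\|+\|x|(T_\eta\setminus R)\|+\|z|(T_\xi\setminus R)\|\ \geq\ \|x-z'\|.$$
This holds because $z|R=z'|R$ and the off-root parts of $z$ and $z'$ have the same norm.

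\emph{Case $\cZ'\neq\cX$.} Condition (4) for $\X_\eta$ and compactness (Lemma~\ref{lem:Scenter}) give $\|x-z'\|\geq\dist(\cX,\cZ')>2$, so $\|x-z\|>2$.

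\emph{Case $\cZ'=\cX$.} Since $\pi(\cX)=\cZ\neq\cX$, the support of $\cX$ is not contained in $R$. If $\alpha_{\cX}\in R$, then $S_{\cX}\subseteq R$ because $R$ is an initial segment and $S_{\cX}<\alpha_{\cX}$; then $\cX$ would be supported in $R$, which is impossible. So $\alpha_{\cX}\notin R$.

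If moreover $S_{\cX}\subseteq R$, then $\alpha_{\cX}<\sup S_{\cX}+\omega_1\leq\sup R+\omega_1$. By the thinning this forces $\alpha_{\cX}\in R$, a contradiction. Hence $S_{\cX}\not\subseteq R$, so $\sup S_{\cX}>\sup R$.

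Now use~\eqref{eq:Eeps} for the center $\cY_{\cX}$ with $E_n\to\sup S_{\cX}$. Choosing $n$ with $\min E_n>\sup R$, every $y\in\cY_{\cX}$ has mass $>\epsilon_n$ off $R$. Therefore every $x\in\cX$ has off-root mass at least $1+\epsilon_n$: the $1$ comes from $1_{\alpha_{\cX}}$, since $\alpha_{\cX}\notin S_{\cX}$. The displayed identity then gives
$$\|x-z\|\geq 2(1+\epsilon_n)>2$$
uniformly in $x$ and $z$, so $\dist(\cX,\cZ)>2$.

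\textbf{Main obstacle.} The crux is the second case: a center compared with its own copy. Strictness comes only from the $\epsilon_n$-mass supplied by~\eqref{eq:Eeps}. This is why the thinning must place the non-root parts above $\sup R+\omega_1$, so that condition (2) of $\mathbb{X}$ pins $\alpha_{\cX}$ into $R$ whenever $S_{\cX}\subseteq R$.
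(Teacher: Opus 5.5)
Your proof is correct, but it is organized differently from the paper's. The paper never counts isomorphism types of whole conditions. After the $\Delta$-system step (root $\Delta\subseteq\delta$ with $\tilde S_\beta\cap\delta=\Delta$), it only makes the ``old part'' $\mathcal{R}_\beta=\{\cX_0\}\cup\{\cX\in\X_\beta\setminus\{\cX_0\}:S_{\cX}\subseteq\delta\}$ constant. This is where the bound $\alpha_{\cX}<\sup S_{\cX}+\omega_1$ from item~(2) enters, since it leaves only $\omega_1$ candidate centers. The paper then picks $\beta$ with $\tilde S_\beta\cap[\delta,\gamma+1)=\emptyset$, where $\gamma=\sup\tilde S_0$. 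For a new $\cZ\in\X_0$ and a new $\cX\in\X_\beta$ it estimates $\|x-z\|$ directly on three distinct coordinates: $1$ at $\alpha_{\cZ}\in(\delta,\gamma]$, $1$ at $\alpha_{\cX}>\gamma$, and more than $\epsilon_l$ at a point of $E_l^{\cX}$ above $\gamma$. No correspondence between the centers of the two conditions is needed, only that the new material of $\X_\beta$ sits above all of $\X_0$.

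You instead use the renaming $\pi$ and the inequality $\|x-z\|\geq\|x-\pi^{-1}(z)\|$ to pull every cross distance back into $\X_\eta$. Only a center versus its own copy remains. There the same bound from item~(2) is used positionally, via the gap above $\sup R+\omega_1$, to force $\sup S_{\cX}>\sup R$, and \eqref{eq:Eeps} then gives $2+2\epsilon_n$. Your reduction is conceptually clean and needs only that the non-root parts be disjoint; your requirement that $T_\xi\setminus R$ lie above $\sup T_\eta+\omega_1$ is in fact never used. The price is counting full types and checking that renaming coordinates commutes with the coded sets. The paper's three-coordinate estimate is more hands-on but avoids this transport machinery.

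Two small remarks. For item~(3), say explicitly that $\alpha_{\cX}=\alpha_{\cZ}\in R$ forces $S_{\cX},S_{\cZ}\subseteq R$. Then both centers are fixed by $\pi$ and lie in $\X_\eta$, where (3) already holds. In your first case compactness is unnecessary: $\|x-z\|\geq\dist(\cX,\cZ')$ for all $x\in\cX$ and $z\in\cZ$ already gives $\dist(\cX,\cZ)\geq\dist(\cX,\cZ')>2$.
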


\begin{proof}
Let $\{\X_\beta:\beta<\omega_2\}$ be a family of $\mathbb{X}$-conditions.
Define
$$
\tilde S_\beta:=\bigcup_{\cX\in\X_\beta\setminus\{\cX_0\}}S_{\cX}\cup
\{\alpha_{\cX}:\cX\in\X_\beta\setminus\{\cX_0\}\}
$$
for all $\beta<\omega_2$.
By $\mathsf{CH}$  and by the corresponding $\Delta$-system Lemma for  countable sets, 
we may assume that the family
$\{\tilde S_\beta:\beta<\omega_2\}$ has a root $\Delta\subseteq\delta$, for some
$\delta<\omega_2$, and that $\tilde S_\beta\cap\delta=\Delta$ for all
$\beta<\omega_2$.
For each $\beta<\omega_2$, put
$\mathcal{R}_\beta:=\{\cX_0\}\cup\{\cX\in\X_\beta\setminus\{\cX_0\}:S_{\cX}\subseteq\delta\}$.
By Lemma~\ref{lem:Scenter}(3),  by item~(2) in the definition of $\mathbb{X}$  and by
$\mathsf{CH}$, there are at most $\omega_1$ translated centers $\cX$ such that
$S_{\cX}\subseteq\delta$, and, together with the fixed set $\cX_0$, there are at most
$\omega_1$ possibilities for the countable family $\mathcal{R}_\beta$.
Passing to a subfamily of size $\omega_2$, we may assume that
$\mathcal{R}_\beta=\mathcal{R}$ for all $\beta<\omega_2$.

If $\X_\beta=\mathcal{R}$ for some $\beta<\omega_2$, then $\X_\beta$ is compatible
with every condition in the family.
Thus, we may assume that $\X_\beta\setminus\mathcal{R}\neq\emptyset$ for all
$\beta<\omega_2$.
Put $\gamma:=\sup\tilde S_0$.
Since $|[\delta,\gamma+1)|\leq\omega_1$ and the sets
$\tilde S_\beta\setminus\Delta$ are pairwise disjoint, there is $\beta>0$ such that
$$
\tilde S_\beta\cap[\delta,\gamma+1)=\emptyset.
$$
We claim that $\X_0\cup\X_\beta\in\mathbb{X}$.
It remains to consider $\cX\in\X_\beta\setminus\mathcal{R}$ and
$\cZ\in\X_0\setminus\mathcal{R}$.
Fix such $\cX$ and $\cZ$, and write $\cX=\cY_{\cX}+1_{\alpha_{\cX}}$ and
$\cZ=\cY_{\cZ}+1_{\alpha_{\cZ}}$.

Since $\cX\notin\mathcal{R}$, we have $S_{\cX}\not\subseteq\delta$.
By the choice of $\beta$, it follows that $\sup S_{\cX}>\gamma$.
Let $(E_l^{\cX})_{l\in\N}$ and $(\epsilon_l^{\cX})_{l\in\N}$ witness the
$S_{\cX}$-center $\cY_{\cX}$, and thus satisfy~\eqref{eq:Eeps}.
Choose $l\in\N$ such that $\min E_l^{\cX}>\gamma$.

Fix $x\in \cX$ and $z\in \cZ$, and let $y_x\in \cY_{\cX}$ and $y_z\in \cY_{\cZ}$ satisfy
$x=y_x+1_{\alpha_{\cX}}$ and $z=y_z+1_{\alpha_{\cZ}}$.
Since $\cZ\notin\mathcal{R}$, we have $\sup S_{\cZ}\geq\delta$.
Moreover, $\alpha_{\cZ}\in\tilde S_0$, and thus
$\delta<\alpha_{\cZ}\leq\gamma$.
It follows from the choice of $\beta$ that $\alpha_{\cZ}\notin S_{\cX}$.
Also, $\alpha_{\cX}>\sup S_{\cX}>\gamma\geq\alpha_{\cZ}$, and hence
$x(\alpha_{\cZ})=0$, while $z(\alpha_{\cZ})=1$.

By~\eqref{eq:Eeps}, there is $\alpha\in E_l^{\cX}$ such that
$|y_x(\alpha)|>\epsilon_l^{\cX}$.
Since $\alpha>\gamma$, we have $z(\alpha)=0$.
Also, $\alpha_{\cX}>\gamma$, so $z(\alpha_{\cX})=0$, while
$x(\alpha_{\cX})=1$.
Therefore
$$
\|x-z\|\geq
|x(\alpha_{\cZ})-z(\alpha_{\cZ})|+|x(\alpha)-z(\alpha)|+
|x(\alpha_{\cX})-z(\alpha_{\cX})|>1+\epsilon_l^{\cX}+1.
$$
It follows that $\dist(\cX,\cZ)>2$.
Thus, $\X_0\cup\X_\beta\in\mathbb{X}$.
\end{proof}

\begin{lemma}\label{lem:dense}
Let $p\in\SSS^{\omega}$ and $\dot{x}$ be an $\SSS^{\omega}$-name for an element of
$\ell_1(\omega_2)$.
Then the set 
$$
\mathcal{D}_{p,\dot{x}}:=\{\X\in \mathbb{X}:\quad
\exists \cX\in\X\ \exists q\leq p\ q\forces \dist(\dot{x}, \cX)\leq 1\}
$$
is dense in $\mathbb{X}$.
\end{lemma}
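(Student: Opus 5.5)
Given $\X\in\mathbb{X}$, we will find $\X'\leq\X$ in $\mathcal{D}_{p,\dot{x}}$ by a case split. If some $q\leq p$ and some $\cX\in\X$ satisfy $q\forces\dist(\dot{x},\cX)\leq 1$, then $\X$ itself lies in $\mathcal{D}_{p,\dot{x}}$. So assume the contrary: $p\forces\dist(\dot{x},\cX)>1$ for every $\cX\in\X$. Since $\X$ is countable and $\SSS^\omega$ is proper, we can strengthen $p$ so that it decides a countable set $S'\subseteq\omega_2$ with $p\forces\supp(\dot{x})\subseteq S'$.

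We first make sure $\dot{x}\neq 0$. We have $\dist(0,\cX_0)=1$, so $p\forces\dot{x}\neq 0$ follows automatically from $p\forces\dist(\dot{x},\cX_0)>1$.

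Next we arrange the hypothesis $\sup\supp(\dot{x})=\sup S$ of Lemma~\ref{lem:IBmXn}. Strengthening $p$ via continuous reading of names, we decide an ordinal $\eta=\sup\supp(\dot{x})$ and put $S:=S'\cap(\eta+1)$. Deciding this supremum is not obvious, since $\sup\supp(\dot{x})$ is a name for a countable-cofinality ordinal below $\omega_2$. We handle it as follows. The support is forced into the countable ground-model set $S'$, and $\sup\supp(\dot{x})$ is then a name for an element of the countable closure $\overline{S'}$. A countable maximal antichain therefore decides it, and we strengthen $p$ to a single condition deciding its value. We also enlarge $S$ if necessary so that $\sup S=\eta$ while still $S\subseteq\eta+1$.

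Enumerate $\X=\{\cX_n:n\in\N\}$, with repetitions allowed. Lemma~\ref{lem:IBmXn} now gives $q\leq p$ and an $S$-center $\cY$ for $(q,\dot{x})$ with $\diam(\cY)<1$ and $\dist(\cY,\cX)>1$ for all $\cX\in\X$. Choose an ordinal
\[
\alpha\in(\sup S,\sup S+\omega_1)
\]
that is distinct from every $\alpha_{\cZ}$ with $\cZ\in\X$ and lies outside $\supp(\cZ)$ for all $\cZ\in\X$. This is possible because $\X$ is countable and the interval has size $\omega_1$. Put $\cX:=\cY+1_\alpha$, which is a translated center, and let $\X':=\X\cup\{\cX\}$. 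Items~(1)--(3) of the definition of $\mathbb{X}$ then hold for $\X'$.

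The main obstacle is item~(4), namely $\dist(\cX,\cZ)>2$ for every $\cZ\in\X$. Take $y\in\cY$ and $z\in\cZ$. Since $\alpha\notin S\cup\supp(\cZ)$, the coordinate $\alpha$ contributes exactly $1$, so
\[
\|y+1_\alpha-z\|=1+\|y-z|(\kappa\setminus\{\alpha\})\|=1+\|y-z\|>2,
\]
where the last step uses $\dist(\cY,\cZ)>1$. For $\dist(\cX,\cZ)>2$ we also need the infimum to be strict, not merely each individual distance. Compactness of $\cY$ and of $\cZ$ gives this: translated centers are compact by Lemma~\ref{lem:Scenter}(2), and so is $\cX_0$. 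Hence $\X'\in\mathbb{X}$ and $\X'\leq\X$.

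Finally, $q\leq p$ and $q\forces\dot{x}\in\cY$, so $q\forces\|\dot{x}-(y+1_\alpha)\|\leq 1$ for $y=\dot{x}$. Thus $q\forces\dist(\dot{x},\cX)\leq 1$, and $\X'\in\mathcal{D}_{p,\dot{x}}$.
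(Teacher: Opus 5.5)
Your proof is correct and follows the paper's argument essentially step for step. Like the paper, you reduce to $p\forces\dist(\dot x,\cX)>1$ for all $\cX\in\X$, get $\dot x\neq 0$ from $\cX_0$, strengthen $p$ to fix $S$ with $\sup\supp(\dot x)=\sup S$, apply Lemma~\ref{lem:IBmXn}, and translate $\cY$ by $1_\alpha$ for a fresh $\alpha\in(\sup S,\sup S+\omega_1)$. The compactness detour at the end is not needed: $\|y+1_\alpha-z\|=1+\|y-z\|\geq 1+\dist(\cY,\cZ)$ already gives $\dist(\cX,\cZ)=1+\dist(\cY,\cZ)>2$, and this is how the paper concludes.
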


\begin{proof}
Fix $\X\in\mathbb{X}\setminus \mathcal{D}_{p,\dot{x}}$ and write
$\X=\{\cX_n:n\in \N\}$.
By the definition of $\mathcal{D}_{p,\dot{x}}$,
$p\forces \dist(\dot{x}, \cX_n)>1$ for all $n\in\N$. 
Since $\cX_0\in\X$ and $\dist(0,\cX_0)=1$, it follows that $p\forces\dot{x}\neq0$.
Strengthening $p$, if needed, we may assume that there is a countable set $S\subseteq\omega_2$
such that 
$$
p\forces\supp(\dot{x})\subseteq S\ \wedge\ \sup\supp(\dot{x})=\sup S.
$$
Applying Lemma~\ref{lem:IBmXn} to the condition $p$ and to the sets $S$ and $\cX_n$, we get a
condition $q\leq p$ and an $S$-center $\cY$ for $(q,\dot{x})$ such that
$\dist(\cY,\cX_n)>1$ for all $n\in\N$.
Pick $\alpha\in(\sup S,\sup S+\omega_1)$ outside
$\{0\}\cup\bigcup_{\cZ\in\X\setminus\{\cX_0\}}(S_{\cZ}\cup\{\alpha_{\cZ}\})$.
Such an $\alpha$ exists since this set is countable. Put $\cX:=\cY+1_\alpha$.
It follows that $q\forces \dot{x}\in \cY$, and thus $q\forces
\dist(\dot{x},\cX)\leq 1$.
By Lemma~\ref{lem:Scenter}(1) and the choice of $\alpha$, all elements of $\cY$ and
$\cZ$ vanish at $\alpha$ for every $\cZ\in\X$. Hence
$\dist(\cX,\cZ)=\dist(\cY,\cZ)+1>2$ for all $\cZ\in\X$.
We conclude that $\X\cup\{\cX\}\in\mathcal{D}_{p,\dot{x}}$.
\end{proof}

\subsection{{Proof of Theorem~\ref{thm:mainbelowc}}}

\begin{proposition}\label{prop:tilingfamily}
Let $\kappa=\omega_1$ or $\kappa=\omega_2$, and let $\lambda>\kappa$ be a cardinal number of
uncountable cofinality in a model $V$ satisfying $\mathsf{GCH}$.
There is a model $W$ containing $V$ and satisfying $\mathsf{CH}$ with
$(\SSS^\lambda)^V=(\SSS^\lambda)^W$ such that in $W$ we have
$\lambda^\omega=\lambda>\kappa$ and there is a family $\tilde{\X}$ of subsets of
$\ell_1(\kappa)$ with the following properties:
\begin{enumerate}
\item $\cX_0\in\tilde{\X}$, and each $\cX\in\tilde{\X}\setminus\{\cX_0\}$ is a translated center,
\item $\dist(\cX,\cX')>2$ for distinct $\cX,\cX'\in\tilde{\X}$,
\item for every $p\in\SSS^{\omega}$ and every $\SSS^{\omega}$-name $\dot{x}$ for an
element of $\ell_1(\kappa)$, there are $q\leq p$ and $\cX\in\tilde{\X}$ such that
$q\forces \dist(\dot{x},\cX)\leq 1$.
\end{enumerate}
\end{proposition}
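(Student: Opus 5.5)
We split into the two cases $\kappa=\omega_2$ and $\kappa=\omega_1$. In both cases $\tilde{\X}$ is the union of a suitably generic (or sufficiently closed) filter of conditions in $\mathbb{X}$. Properties (1) and (2) will then be inherited from the definition of $\mathbb{X}$, and property (3) from the density Lemma~\ref{lem:dense}.

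\emph{Case $\kappa=\omega_2$.} Work in $V\models\mathsf{GCH}$, so $\mathsf{CH}$ holds and Lemma~\ref{lem:kapcc} applies. Let $G$ be $\mathbb{X}$-generic over $V$, put $W=V[G]$ and $\tilde{\X}=\bigcup G$.
\begin{enumerate}
\item By Lemma~\ref{lem:sigma}, $\mathbb{X}$ adds no new countable sequences of ground-model objects. Hence $W$ has the same reals, $\mathsf{CH}$ holds in $W$, and all coded sets ($\cX(S,m,A,\I)$, centers, translated centers) are interpreted identically in $V$ and $W$.
\item By Lemma~\ref{lem:kapcc}, all cardinals are preserved.
\item Conditions of $\SSS^\lambda$ are countable-support functions into perfect subtrees of $2^{<\omega}$. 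So no new conditions appear and $(\SSS^\lambda)^V=(\SSS^\lambda)^W$ with the same order.
\item $\lambda^\omega=\lambda$ holds in $V$ by $\mathsf{GCH}$ and $\cf(\lambda)>\omega$. It survives in $W$ because no countable subsets of $\lambda$ are added.
\end{enumerate}
Properties (1) and (2) follow since any two members of $\tilde{\X}$ lie in a common condition: $G$ is a filter, so two conditions in $G$ have a common extension in $G$.

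For (3), fix in $W$ a condition $p\in\SSS^\omega$ and an $\SSS^\omega$-name $\dot{x}$. This is the main obstacle. The name $\dot{x}$ need not lie in $V$: $\mathbb{X}$ may add subsets of $\omega_1$, so even a nice name of size $2^\omega$ could be new. I would handle this with continuous reading of names.
\begin{enumerate}
\item In $W$ find $p'\leq p$ such that, below $p'$, $\dot{x}$ is determined by a countable code.
\item This code is a countable object, so it lies in $V$. It therefore yields a ground-model name $\dot{x}'$ with $p'\forces\dot{x}=\dot{x}'$.
\item In $V$, Lemma~\ref{lem:dense} gives that $\mathcal{D}_{p',\dot{x}'}$ is dense.
\item Genericity gives $\X\in G\cap\mathcal{D}_{p',\dot{x}'}$. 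Hence there are $\cX\in\X\subseteq\tilde{\X}$ and $q\leq p'$ with $q\forces\dist(\dot{x}',\cX)\leq 1$.
\end{enumerate}
Since $p'$ and $\dot{x}'$ are in $V$ and $\SSS^\omega$ is the same poset in both models, $q\forces\dist(\dot{x}',\cX)\leq 1$ is evaluated the same way in $V$ and $W$. Combined with $q\leq p'$ and $p'\forces\dot{x}=\dot{x}'$, we get $q\leq p$ and $q\forces\dist(\dot{x},\cX)\leq1$, giving (3). I also need to check that the phrase ``$\dist(\dot x,\cX)\le1$'' means the same in both models. This follows from the Mostowski absoluteness remark at the start of the section.

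\emph{Case $\kappa=\omega_1$.} Take $W=V$, and construct $\tilde{\X}$ directly by recursion using $\mathsf{CH}$. Define $\mathbb{X}$ analogously inside $\ell_1(\omega_1)$, with item (2) of its definition replaced by $\sup S_{\cX}<\alpha_{\cX}<\omega_1$. The proofs of Lemmas~\ref{lem:sigma} and~\ref{lem:dense} go through verbatim, since a countable set of ordinals below $\omega_1$ always leaves room for a new $\alpha$ above $\sup S$.

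Modulo continuous reading, there are only $2^\omega=\omega_1$ pairs $(p,\dot{x})$ given by countable codes; enumerate them as $(p_\xi,\dot{x}_\xi)_{\xi<\omega_1}$. Build a decreasing sequence $(\X_\xi)_{\xi<\omega_1}$ in $\mathbb{X}$:
\begin{enumerate}
\item at successor stages, use Lemma~\ref{lem:dense} to place $\X_{\xi+1}$ in $\mathcal{D}_{p_\xi,\dot{x}_\xi}$;
\item at limit stages, take unions, which are conditions by $\sigma$-closure.
\end{enumerate}
Let $\tilde{\X}=\bigcup_\xi\X_\xi$. Items (1) and (2) hold because the conditions form a chain. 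For item (3), first reduce an arbitrary pair $(p,\dot{x})$ to a coded pair below a stronger $p'$ by continuous reading, and then use the stage at which that coded pair was handled.
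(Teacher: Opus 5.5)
Your proposal is correct and follows the paper's proof. For $\kappa=\omega_2$ the argument is the same: force with $\mathbb{X}$, use continuous reading to pull an arbitrary $\SSS^\omega$-name back to a ground-model name below some $p'\le p$, and apply Lemma~\ref{lem:dense}. For $\kappa=\omega_1$, your descending $\omega_1$-sequence through a variant of $\mathbb{X}$ is the paper's direct recursion with the proof of Lemma~\ref{lem:dense} inlined at each step; the extra requirement $\alpha_{\cX}>\sup S_{\cX}$ is harmless but not needed there. Two of your steps are slightly more careful than the written proof: reducing an arbitrary pair $(p,\dot x)$ to a coded pair below a stronger condition, rather than asserting one family of $\omega_1$ names represents every name outright, and explicitly checking that the forcing relation is absolute between $V$ and $W$.
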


\begin{proof}
\textbf{Case 1:} Assume that $\kappa=\omega_1$.
In this case, the auxiliary forcing $\mathbb{X}$ is not needed; instead, we construct the
required family by recursion of length $\omega_1$.
Put $W:=V$.
By continuous reading of names and $\mathsf{CH}$, there 
is a family $\mathcal{N}$ of $\SSS^{\omega}$-names for elements of 
$\ell_1(\omega_1)$ such that $|\mathcal{N}|=2^\omega=\omega_1$ and for every
$\SSS^{\omega}$-name $\dot{x}$ for an element of $\ell_1(\omega_1)$ there is
$\dot{y}\in\mathcal{N}$ such that $
\SSS^{\omega}\forces\dot{x}=\dot{y}$.
Enumerate $\SSS^{\omega}\times\mathcal{N}$ as $\{(p_\alpha,
\dot{x}_\alpha):0<\alpha<\omega_1\}$.
Recursively on $0<\alpha<\omega_1$ define sets $\cX_\alpha$ in $\ell_1(\omega_1)$
and conditions $q_\alpha\leq p_\alpha$, where each $\cX_\alpha$ is either $\cX_0$ or a translated center, such that
\begin{enumerate}
\item $\dist(\cX_\alpha, \cX_\beta)>2$ for all $\beta<\alpha$ with $\cX_\beta\neq \cX_\alpha$,
\item $\diam(\cX_\alpha)<1$,
\item there is $\beta\leq \alpha$ such that $q_\alpha\forces \dist(\dot{x}_\alpha,
\cX_\beta)\leq 1$.
\end{enumerate}
Fix an ordinal number $0<\alpha<\omega_1$ and assume that sets $\cX_\beta$ in
$\ell_1(\omega_1)$, each equal to $\cX_0$ or a translated center, have been defined for all
$\beta<\alpha$, and conditions $q_\beta\leq p_\beta$ have been defined for all
$0<\beta<\alpha$.
If there are $\beta<\alpha$ and $r\leq p_\alpha$ such that $r\forces \dist(\dot{x}_\alpha,
\cX_\beta)\leq 1$, then put $q_\alpha:=r$ and $\cX_\alpha:=\cX_\beta$.
If not, then
$p_\alpha\forces \dist(\dot{x}_\alpha, \cX_\beta)>1$ for all $\beta<\alpha$.
Since $0<\alpha$, the preceding condition applies to $\beta=0$,
and as $\dist(0,\cX_0)=1$, it follows that $p_\alpha\forces\dot{x}_\alpha\neq0$.
Strengthening $p_\alpha$ if needed, we may assume that there is a countable set
$S\subseteq\omega_1$ such that
$$
p_\alpha\forces\supp(\dot{x}_\alpha)\subseteq S\ \wedge\ \sup\supp(\dot{x}_\alpha)=\sup S.
$$
Since $\alpha<\omega_1$, the family $\{\cX_\beta:\beta<\alpha\}$ is countable.
Applying Lemma~\ref{lem:IBmXn} to the condition $p_\alpha$, the set $S$, and an enumeration
of the family $\{\cX_\beta:\beta<\alpha\}$, we get a condition $q_\alpha\leq p_\alpha$ and
an $S$-center $\cY$ for $(q_\alpha,\dot{x}_\alpha)$ in $\ell_1(\omega_1)$ such that
$$
\dist(\cY,\cX_\beta)>1\quad\text{for all }\beta<\alpha
$$
and $\diam(\cY)<1$.
Pick $\alpha'\in\omega_1$ outside
$S\cup\{0\}\cup\bigcup\{S_{\cX_\beta}\cup\{\alpha_{\cX_\beta}\}:\beta<\alpha,\ \cX_\beta\neq \cX_0\}$.
For the translated center $\cX_\alpha:=\cY+1_{\alpha'}$, we have $\diam(\cX_\alpha)<1$,
$$
q_\alpha\forces \dist(\dot{x}_\alpha, \cX_\alpha)\leq 1
$$
and $\dist(\cX_\alpha, \cX_\beta)>2$ for all $\beta<\alpha$.

Define $\tilde{\X}:=\{\cX_\alpha:\alpha<\omega_1\}$.
Let $p\in\SSS^{\omega}$ and $\dot{x}$ be an $\SSS^{\omega}$-name for an
element of $\ell_1(\omega_1)$.
By the property of the family $\mathcal{N}$ we may assume that $\dot{x}\in\mathcal{N}$.
There is $0<\alpha<\omega_1$ such that
$(p,\dot{x})=(p_\alpha,\dot{x}_\alpha)$.
By the construction, there is $\beta\leq\alpha$ such that $q_\alpha\leq p$ and
$q_\alpha\forces\dist(\dot{x},\cX_\beta)\leq 1$.

\textbf{Case 2:} Assume that $\kappa=\omega_2$.
In this case, we first force with the auxiliary forcing $\mathbb{X}$.
Let $G_{\mathbb{X}}$ be an $\mathbb{X}$-generic filter over $V$.
By Lemma~\ref{lem:sigma}, the forcing $\mathbb{X}$ does not add new countable subsets of sets
from $V$, and thus $V[G_\mathbb{X}]$ satisfies $\mathsf{CH}$.
By Lemma~\ref{lem:kapcc}, the forcing $\mathbb{X}$ preserves cardinalities greater than or equal to
$\kappa$, in particular it preserves $\kappa$ and $\lambda$.
It follows that $\lambda^\omega=\lambda$, and the space $\ell_1(\kappa)$ and the forcing
$\SSS^{\lambda}$ do not change when we pass from $V$ to $V[G_\mathbb{X}]$.

Define $W:=V[G_\mathbb{X}]$ and $\tilde{\X}:=\bigcup G_\mathbb{X}$.
By the properties of conditions in $\mathbb{X}$ and the coding conventions fixed above, 
$\cX_0\in\tilde{\X}$, each $\cX\in\tilde{\X}\setminus\{\cX_0\}$ is a translated center,
and $\dist(\cX,\cX')>2$ for distinct $\cX,\cX'\in\tilde{\X}$.
Let $p\in\SSS^{\omega}$ and $\dot{x}$ be an $\SSS^{\omega}$-name for an element of
$\ell_1(\kappa)$. By continuous reading of names and the fact that
$\mathbb{X}$ adds no new countable sets, there are $p'\leq p$ and an
$\SSS^\omega$-name $\dot{y}\in V$ such that
$p'\forces\dot{x}=\dot{y}$.
By Lemma~\ref{lem:dense}, the set $\mathcal{D}_{p',\dot{y}}$ is dense in
$\mathbb{X}$. Since $G_\mathbb{X}$ is $\mathbb{X}$-generic over $V$, there is
$\X\in\mathcal{D}_{p',\dot{y}}\cap G_\mathbb{X}$.
Consequently, there are $q\leq p'$ and $\cX\in\tilde{\X}$ such that
$q\forces\dist(\dot{x},\cX)\leq1$.
\end{proof}

Let $V$ be a model satisfying $\mathsf{GCH}$ in which $\lambda>\kappa$ has uncountable
cofinality, and let $W$ and $\tilde{\X}$ be given by Proposition~\ref{prop:tilingfamily}.
Let $G$ be an $\SSS^{\lambda}$-generic filter over $W$.
We claim that the family $\{\mathrm{B}(\cX,1):\cX\in\tilde{\X}\}$ is a normal disjoint
tiling of $\ell_1(\kappa)$ by star-shaped bodies in $W[G]$.

First we prove that this family covers $\ell_1(\kappa)$:
For a countably infinite set $C$, let $i_C\colon \SSS^C\to\SSS^{\omega}$ be an isomorphism.
If $\dot{x}$ is an $\SSS^C$-name, then $i_C(\dot{x})$ denotes the corresponding
$\SSS^{\omega}$-name induced by this isomorphism.
Let $p\in\SSS^{\lambda}$ and $\dot{x}$ be an $\SSS^{\lambda}$-name for an element 
of $\ell_1(\kappa)$.
By continuous reading of names, strengthening $p$ if needed, we may assume 
that there is a countably infinite set $C\subseteq\lambda$ such that $p\in\SSS^C$
and $\dot{x}$ is an $\SSS^C$-name.
Using the property proved above for $\tilde{\X}$, there is $\cX\in\tilde{\X}$
and a condition $q\in\SSS^{\omega}$ such that 
$$
q\leq i_C(p)\quad\text{and}\quad q\forces \dist(i_C(\dot{x}),\cX)\leq 1.
$$
Equivalently, 
$$
i_C^{-1}(q)\leq p\quad\text{and}\quad i_C^{-1}(q)\forces \dist(\dot{x},\cX)\leq 1.
$$
Thus, the family $\{\mathrm{B}(\cX,1):\cX\in\tilde{\X}\}$ covers $\ell_1(\kappa)$ in
$W[G]$.

If $\cX,\cX'\in\tilde{\X}$ are distinct, then $\dist(\cX,\cX')>2$, and thus
$\mathrm{B}(\cX,1)\cap \mathrm{B}(\cX',1)=\emptyset$.
Since each $\cX\in\tilde{\X}$ is compact and $\diam(\cX)<1$, Lemma~\ref{B} implies
that $\mathrm{B}(\cX,1)=\cX+B(\ell_1(\kappa))$ is a star-shaped body.
Moreover, every such tile contains a ball of radius $1$ and has diameter at most $3$,
so the tiling is normal.

Finally, by the cardinal-preservation and continuum calculation for side-by-side
Sacks forcing from \cite{failure}, since $W$ satisfies $\mathsf{CH}$ and $\lambda^\omega=\lambda$, forcing with
$\SSS^{\lambda}$ over $W$ yields $\kappa<\lambda=2^\omega$.
This completes the proof. \hfill $\square$

\section{Negative results for countable cofinality}\label{sec:cofw}

In this section we work in {\sf ZFC} with $\ell_1(\kappa)$, where $\kappa$ has countable cofinality.

\begin{proposition}\label{cfomega}
Suppose that $\kappa$ is an uncountable cardinal
of countable cofinality. Then $\ell_1(\kappa)$ does not admit a disjoint  tiling by 
$\kappa$ many sets of the form $\mathrm{B}(X, r)$, where $X$ is  nonempty and norm-separable 
and $r>0$.
\end{proposition}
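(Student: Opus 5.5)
The plan is to argue by contradiction and build one point of $\ell_1(\kappa)$ that lies in no tile, by a diagonalization of length $\omega$ that exploits $\cf(\kappa)=\omega$. Suppose $\{\mathrm{B}(X_i,r_i):i\in I\}$ is a disjoint tiling with $|I|=\kappa$ and each $X_i$ nonempty and separable. By Lemma~\ref{support} each $S_i:=\supp(X_i)$ is countable. Fix cardinals $\kappa_0<\kappa_1<\dots$ cofinal in $\kappa$ and write $I=\bigcup_n I_n$ with $I_n\subseteq I_{n+1}$ and $|I_n|\le\kappa_n$. Then $T_n:=\bigcup_{i\in I_n}S_i$ has cardinality $<\kappa$, so fresh coordinates $\beta\in\kappa\setminus T_n$ are always available.

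\emph{Key geometric identity.} Since $X_i\subseteq\ell_1(S_i)$ and the $\ell_1$-norm splits across disjoint supports, for every $y$ we have
\[
\dist(y,X_i)=\dist(y|S_i,X_i)+\|y|(\kappa\setminus S_i)\|.
\]
Two consequences follow. First, adding mass on coordinates outside $S_i$ can only increase $\dist(\cdot,X_i)$. Second, along a ray $c\mapsto y+c1_\beta$ with $\beta\notin S_i$ and $c\ge0$, the set of $c$ for which the point lies in $\mathrm{B}(X_i,r_i)$ is an initial interval $[0,a_i]$ (or empty). In particular, once a point leaves $\mathrm{B}(X_i,r_i)$ for some $i\in I_n$, all later points obtained by adding mass outside $T_n$ stay out of that tile.

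\emph{Construction.} I will build points $x_0,x_1,\dots$ with $x_{n+1}-x_n=c_n1_{\beta_n}$, where $\beta_n\notin T_n$ is distinct from the earlier $\beta$'s, and $\sum_n c_n<\infty$. The limit $x$ then lies in some tile $\mathrm{B}(X_i,r_i)$ with $i\in I_N$. By monotonicity and closedness, $\dist(x_n,X_i)$ is nondecreasing for $n\ge N$ with limit $\dist(x,X_i)\le r_i$. Hence every $x_n$ with $n\ge N$ lies in the tile of index $i$, so the tiles containing the $x_n$ eventually stabilize at an index in $I_N$. The construction must forbid this. So at stage $n$, if the unique tile $j_n$ containing $x_n$ has index in $I_n$, I choose $c_n>a_{j_n}$ so that $x_{n+1}$ leaves that tile; otherwise I take $c_n$ tiny. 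This gives the contradiction, provided the series converges.

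\emph{Main obstacle.} The step I expect to be hardest is guaranteeing $\sum c_n<\infty$. The exit length $a_{j_n}=r_{j_n}-\dist(x_n,X_{j_n})$ need not be small, and the radii are not even assumed bounded. My first attempt is to keep each $x_n$ just beyond the boundary of the tile it last exited, choosing $c_n=a_{j_n}+2^{-n}$. I would then try to use disjointness and closedness of the tiles along the ray to show that the next tile entered is entered near its boundary, so that the subsequent exit lengths are small. If that fails, the fallback is to use the freedom in choosing the partition $(I_n)_n$ and the fresh coordinates. Specifically, I would delay an index $j$ into a later $I_n$ until the current point is already within a summable distance of exiting $\mathrm{B}(X_j,r_j)$. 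This should work because $I_n$ is only required to be small, not to be any specific set.
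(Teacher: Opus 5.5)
Your reduction is sound. The splitting identity, the monotonicity of $\dist(x_n,X_i)$ for $n\ge N$ (because $\beta_n\notin T_n\supseteq S_i$), and the resulting stabilization at some $i\in I_N$ are all correct, and stabilization contradicts the exit forced at stage $N$. You should also require $\beta_n\notin\supp(x_0)$, e.g.\ start at $x_0=0$, so that $x_n(\beta_n)=0$; otherwise the ray $c\mapsto x_n+c1_{\beta_n}$ need not behave as claimed.

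However, the one step where the geometry of the tiling really matters, $\sum_n c_n<\infty$, is left unproved. You only describe what you would try and offer a fallback, so as it stands this is a genuine gap. The exit lengths $a_{j_n}$ are a priori unbounded (the radii need not even be bounded), and nothing else in your argument controls them. The fallback of delaying indices into later $I_n$ is not an argument. It is also unclear how to keep it compatible with $\bigcup_n I_n=I$ when the $I_n$ are chosen during the construction.

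The missing step is short, and your first attempt does work. Put $e_n:=x_n+a_{j_n}1_{\beta_n}$ and $x_{n+1}:=e_n+2^{-n}1_{\beta_n}$. Suppose $x_{n+1}\in\mathrm{B}(X_{j'},r_{j'})$. Then $j'\neq j_n$ while $e_n\in\mathrm{B}(X_{j_n},r_{j_n})$, so by disjointness $\dist(e_n,X_{j'})>r_{j'}$. Since $\dist(\cdot,X_{j'})$ is $1$-Lipschitz, $\dist(x_{n+1},X_{j'})>r_{j'}-2^{-n}$. So the slack of $x_{n+1}$ in its new tile is below $2^{-n}$.

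Take $c_m=0$ at non-exit stages (or note that a step of length $c_m$ raises the slack by at most $c_m$). Then only the first exit can be long. Each later exit at stage $m$, following an exit at stage $n$, costs less than $2^{-n}+2^{-m}$, so the series converges.

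The paper implements the same idea more economically. After translating so that the starting point lies exactly on the boundary of its tile, it moves at most $2^{-k}$ along a fresh coordinate. Using continuity of $t\mapsto\dist(y_k+t1_\alpha,X_\eta)$ and closedness of the tile, it stops exactly on the boundary of the newly entered tile. Being exactly on a boundary means any positive mass placed outside $A_{n_k}$ (the union of the supports of the tiles of index below $\kappa_{n_k}$) leaves all those tiles at once. This gives the paper's item~(5) directly and replaces your limit-stabilization step. Your overshoot version is the same mechanism with slack $2^{-n}$ instead of $0$.
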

\begin{proof}
Let $(\kappa_n)_{n\in\N}$ be an increasing sequence of cardinal numbers below $\kappa$ which
has supremum $\kappa$.
Suppose that $\{\mathrm{B}(X_\xi,r_\xi):\xi<\kappa\}$ is such a disjoint tiling.
Fix $\gamma_0<\kappa$ and $x\in X_{\gamma_0}$.
Choose $\alpha\in\kappa\setminus\supp(X_{\gamma_0})$ and put
$z=x+r_{\gamma_0}\cdot1_\alpha$.
Then $\dist(z,X_{\gamma_0})=r_{\gamma_0}$, and so 
$z\in\mathrm{B}(X_{\gamma_0},r_{\gamma_0})$ and
$z\not\in \mathrm{B}(X_{\xi},r_{\xi})$ for any $\xi\in\kappa\setminus\{\gamma_0\}$.

Replacing each $X_\xi$ by $X_\xi-z$, we may therefore assume that
$$
\dist(0,X_{\gamma_0})=r_{\gamma_0}
\quad\hbox{and}\quad
\dist(0,X_\xi)>r_\xi\quad\hbox{for every }\xi\in\kappa\setminus\{\gamma_0\}.
$$
This replacement preserves the assumptions of the proposition, since $\supp(z)$ is countable.
Define
$$
A_n:=\bigcup\{\supp(X_\gamma):\gamma< \kappa_n\}
$$
for all $n\in\N$.
By induction on $k\in \N$ we construct  $n_k\in \N$ and $y_k\in
\ell_1(A_{n_k})$ with the following properties:
\begin{enumerate}
\item $n_k<n_{k+1}$,
\item $\dist(y_k, X_\gamma)=r_\gamma$ for some $\gamma<\kappa_{n_k}$,
\item $y_{l}|A_{n_k}=y_k$ for $l\geq k$,
\item $0<\|y_{k+1}|(A_{n_{k+1}}\setminus A_{n_k})\|\leq 1/2^k$,
\item if $y|A_{n_k}=y_k$ and $y|(\kappa\setminus A_{n_k})\not=0$, then
$y\not\in\bigcup\{\mathrm{B}(X_\gamma,r_\gamma):\gamma<\kappa_{n_k}\}$.
\end{enumerate}
Fix $n_0\in\N$ such that $\gamma_0<\kappa_{n_0}$ and put $y_0=0$.
Then item~(2) follows from the above translation of tiles, and item~(5) follows from
the additivity of the $\ell_1$ norm on disjoint supports.

Now let $k\in\N$ and suppose that $n_k$ and $y_k\in\ell_1(A_{n_k})$ have been
constructed and satisfy the relevant items~(1)--(5).
By~(2), fix $\gamma<\kappa_{n_k}$ such that
$\dist(y_k,X_\gamma)=r_\gamma$.
Let $t:=\frac{1}{2^k}$. 
Fix $\alpha\in\kappa\setminus A_{n_k}$, which is possible by the separability
of the $X_\gamma$s, by Lemma \ref{support} and since $\kappa_{n_k}<\kappa$.
By~(5) and since the tiling covers $\ell_1(\kappa)$, there is $\eta\geq \kappa_{n_k}$ such that
$$
y_k+t\cdot1_\alpha\in \mathrm{B}(X_\eta,r_\eta).
$$
Since $\gamma\neq\eta$, we have $y_k\notin\mathrm{B}(X_\eta,r_\eta)$, and thus $\dist(y_k,
X_\eta)>r_\eta$.
If $\alpha\notin\supp(X_\eta)$, then
$$
\dist(y_k+t\cdot1_\alpha,X_\eta)=\dist(y_k,X_\eta)+t>r_\eta,
$$
which is impossible.
Thus $\alpha\in\supp(X_\eta)$.
Since $\mathrm{B}(X_\eta,r_\eta)$ is closed, $y_k\notin\mathrm{B}(X_\eta,r_\eta)$, and
$y_k+t\cdot1_\alpha\in\mathrm{B}(X_\eta,r_\eta)$, there is $t'$ such that $0<t'\leq t$ and
$$
\dist(y_k+t'\cdot1_\alpha,X_\eta)=r_\eta.
$$
Fix $n_{k+1}>n_k$ such that $\eta<\kappa_{n_{k+1}}$.
Then $\alpha\in A_{n_{k+1}}\setminus A_{n_k}$.
Let $y_{k+1}:=y_k+t'\cdot 1_\alpha$.
Since the tiling is disjoint and $y_{k+1}\in\mathrm{B}(X_\eta,r_\eta)$, we have
$\dist(y_{k+1},X_\xi)\geq r_\xi$ for all $\xi<\kappa_{n_{k+1}}$.
Then items (1)--(4) follow from the construction and (5) follows from the additivity
of the $\ell_1$ norm on disjoint supports.

Finally, let $y\in \R^\kappa$ be given by $y|A_{n_k}=y_k$ for all $k\in\N$
and $y|(\kappa\setminus\bigcup_{k\in\N}A_{n_k})=0$.
Then $y$ is a well defined element of $ \ell_1(\kappa)$ by~(3) and~(4). 
Item~(5) implies that $y\not\in\mathrm{B}(X_\gamma,r_\gamma)$ for all $\gamma<\kappa$, a
contradiction.
\end{proof}

\section{Appendix: Cardinalities of Chebyshev sets in $\ell_1(\kappa)$}\label{sec:chebyshev}

If $D\subseteq \ell_1(\kappa)$  then 
the  corresponding \emph{Voronoi cells} are sets 
$$V_D(z):=\{x\in\ell_1(\kappa):\|x-z\|\leq\|x-w\|\text{ for every }w\in D\}$$
for $z\in D$. The set $V_D(z)$ is closed, since
$$
V_D(z)=\bigcap_{w\in D}\{x\in\ell_1(\kappa):\|x-z\|\leq\|x-w\|\}.
$$

\begin{lemma}\label{lines} Suppose that $\kappa>0$ is a cardinal.
Let $D\subseteq\ell_1(\kappa)$ be nonempty, $z\in D$, $\alpha\in\kappa$.
Then $V_D(z)\cap (a+\R1_{\alpha})$
 is either empty or a closed interval in $a+\R1_{\alpha}$ for every $a\in \ell_1(\kappa)$
satisfying $a(\alpha)=0$.
\end{lemma}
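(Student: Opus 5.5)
The plan is to parametrize the line by $t\mapsto a+t1_\alpha$, $t\in\R$. Then $V_D(z)\cap(a+\R1_\alpha)$ corresponds to the set
$$
T=\bigcap_{w\in D}T_w,\qquad T_w:=\{t\in\R:\|a+t1_\alpha-z\|\leq\|a+t1_\alpha-w\|\}.
$$
It suffices to show that each $T_w$ is a closed interval (possibly empty, unbounded or degenerate). An intersection of closed intervals of $\R$ is again a closed interval or empty, so this gives the claim. Closedness is immediate from the continuity of the norm, so the whole task reduces to showing that each $T_w$ is convex.

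To compute $T_w$, I will use the additivity of the $\ell_1$ norm on disjoint supports together with $a(\alpha)=0$:
$$
\|a+t1_\alpha-z\|=\|(a-z)|(\kappa\setminus\{\alpha\})\|+|t-z(\alpha)|.
$$
The analogous identity holds for $w$. Writing $c:=\|(a-w)|(\kappa\setminus\{\alpha\})\|-\|(a-z)|(\kappa\setminus\{\alpha\})\|$, a constant independent of $t$, we get
$$
T_w=\{t\in\R: f(t)\leq c\},\qquad f(t):=|t-z(\alpha)|-|t-w(\alpha)|.
$$

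The key step is to observe that $f$ is monotone. If $z(\alpha)\leq w(\alpha)$, then $f$ equals $z(\alpha)-w(\alpha)$ for $t\leq z(\alpha)$. It is linear increasing with slope $2$ on $[z(\alpha),w(\alpha)]$, and it equals $w(\alpha)-z(\alpha)$ for $t\geq w(\alpha)$. So $f$ is nondecreasing and continuous, and its sublevel set $\{f\leq c\}$ is a closed ray $(-\infty,s]$, or $\emptyset$, or $\R$. The case $z(\alpha)>w(\alpha)$ is symmetric: $f$ is nonincreasing and the sublevel set is of the form $[s,\infty)$, $\emptyset$, or $\R$. In every case $T_w$ is a closed interval or empty.

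Finally, intersecting the family $\{T_w:w\in D\}$ of closed intervals yields a closed interval or the empty set. Transporting back along $t\mapsto a+t1_\alpha$ gives the lemma. Here an ``interval'' may be a single point or unbounded; I will read the statement in that broad sense.

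I expect no real obstacle. The only points needing care are the monotonicity of $f$ in the two cases and the convention that degenerate and unbounded intervals count as closed intervals.
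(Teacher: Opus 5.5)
Your proof is correct and follows essentially the paper's argument: use $a(\alpha)=0$ to split the $\ell_1$ norm into the $\alpha$-coordinate and the rest, reduce the condition for each $w\in D$ to $f(t)\le c$ with $f(s)=|s-z(\alpha)|-|s-w(\alpha)|$, and use that $f$ is monotone. The only difference is organizational: you show each $T_w$ is a closed ray, $\emptyset$ or $\R$ and then intersect over $w\in D$, whereas the paper takes two points of $V_D(z)$ on the line and checks directly that every intermediate point satisfies the inequality for each $w$.
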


\begin{proof}
 Let $x,y\in V_D(z)\cap (a+\R1_{\alpha})$, and suppose that
$x(\alpha)\leq t\leq y(\alpha)$. Define  $u=a+t1_{\alpha}\in a+\R1_{\alpha}$.
We will show that $u\in V_D(z)\cap (a+\R1_{\alpha})$.
Fix $w\in D$, using the fact that $x|(\kappa\setminus\{\alpha\})=
y|(\kappa\setminus\{\alpha\})=u|(\kappa\setminus\{\alpha\})$ we have
$$
A:=\sum_{\beta\neq\alpha}|x(\beta)-z(\beta)|=\sum_{\beta\neq\alpha}|y(\beta)-z(\beta)|=
\sum_{\beta\neq\alpha}|u(\beta)-z(\beta)|.
$$
$$B:=\sum_{\beta\neq\alpha}|x(\beta)-w(\beta)|=\sum_{\beta\neq\alpha}|y(\beta)-w(\beta)|
=\sum_{\beta\neq\alpha}|u(\beta)-w(\beta)|.
$$
Since $x,y\in V_D(z)$, by the definition of the norm in $\ell_1(\kappa)$, we have
$$
|x(\alpha)-z(\alpha)|+A\leq |x(\alpha)-w(\alpha)|+B
\leqno (1)$$
and
$$
|y(\alpha)-z(\alpha)|+A\leq |y(\alpha)-w(\alpha)|+B.
\leqno(2)$$
Consider the continuous function $f:\R\rightarrow \R$ given  by $f(s)=
 |s-z(\alpha)|-|s-w(\alpha)|$. By (1) and (2) we have $f(x(\alpha)), f(y(\alpha))\leq B-A$.
The function $f$ is  monotone on $\R$. Indeed, on the interval with endpoints
$z(\alpha), w(\alpha)$ $f$ is nondecreasing if $z(\alpha)<w(\alpha)$ and
nonincreasing if $z(\alpha)>w(\alpha)$ and it is constant 
on the halflines outside this interval  and on the entire $\R$ if $z(\alpha)=w(\alpha)$. Hence, since
$t\in[x(\alpha),y(\alpha)]$, we have $f(t)\leq B-A$ so
$$
|u(\alpha)-z(\alpha)|+A\leq |u(\alpha)-w(\alpha)|+B.
$$
Again by  the definition of the norm in $\ell_1(\kappa)$
we conclude that
$$
\|u-z\|\leq\|u-w\|.
$$
As $w\in D$ was arbitrary, $u\in V_D(z)$. Thus $V_D(z)\cap (a+\R1_{\alpha})$ is a
closed interval as needed.
\end{proof}

\begin{proposition}\label{prop:chebyshev}
Suppose that $\kappa$ is a cardinal. Every Chebyshev set in
$\ell_1(\kappa)$ containing at least two points has cardinality at least
$2^\omega$.
\end{proposition}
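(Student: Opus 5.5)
Let $D$ be a Chebyshev set in $\ell_1(\kappa)$ with at least two points, and suppose towards a contradiction that $|D|<2^\omega$. The plan is to find a single coordinate line $a+\R1_\alpha$ which meets at least two Voronoi cells. That line would then be covered by fewer than $2^\omega$ pairwise disjoint closed sets, which we will show is impossible.

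\textbf{The Voronoi cells of $D$.} Since $D$ is Chebyshev, the metric projection $P_D$ is a well-defined function. For $z\in D$, the fiber $P_D^{-1}(z)$ is exactly $V_D(z)$. Indeed, $x\in V_D(z)$ means that $z$ is a nearest point of $D$ to $x$, and by uniqueness $z=P_D(x)$. Consequently the cells $V_D(z)$, $z\in D$, are pairwise disjoint, closed (as noted before Lemma~\ref{lines}), cover $\ell_1(\kappa)$, and each is nonempty because $z\in V_D(z)$. By Lemma~\ref{lines}, every nonempty intersection of a cell with a coordinate line $a+\R1_\alpha$ (where $a(\alpha)=0$) is a closed interval. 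So on such a line the cells induce a partition of $\R$ into fewer than $2^\omega$ pairwise disjoint closed intervals. Some of these may be degenerate points or unbounded rays.

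\textbf{Finding a line meeting two cells.} Pick distinct $z,w\in D$. The difference $w-z$ has countable support, and the path from $z$ to $w$ can be made coordinatewise: change the coordinates one at a time along a countable enumeration of $\supp(w-z)$. If every coordinate line met only one cell, then the first coordinate step would stay inside $V_D(z)$, and after finitely many steps we would still be in $V_D(z)$. Passing to the limit uses that the partial points converge in $\ell_1$ and that $V_D(z)$ is closed, so the limit $w$ would lie in $V_D(z)$. This contradicts $w\in V_D(w)$ and disjointness. Hence some coordinate line $L=a+\R1_\alpha$ meets at least two cells.

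\textbf{Key step: a partition of $\R$ into intervals has size $1$ or $2^\omega$.} Restricted to $L\cong\R$, the cells give a partition $\mathcal P$ of $\R$ into at least two and fewer than $2^\omega$ closed intervals. I will show directly that this is impossible.

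Consider the set $Q$ of nondegenerate members of $\mathcal P$. It is countable, since each contains a rational. The union $U$ of their interiors is open. Suppose $\R\setminus U$ were countable. Then the degenerate pieces would be countable, so $\mathcal P$ would be a countable partition of $\R$ into at least two closed sets, contradicting Sierpi\'nski's theorem. Hence $\R\setminus U$ is an uncountable closed set, and so it has cardinality $2^\omega$.

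Only countably many points of $\R\setminus U$ are endpoints of members of $Q$. Every other point of $\R\setminus U$ is a degenerate member of $\mathcal P$. Therefore $|\mathcal P|=2^\omega$, which contradicts $|D|<2^\omega$.

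The main obstacle I expect is the limit step in the second paragraph, where one must justify the coordinatewise path. A cleaner alternative is to argue via connectedness: the set of points whose cell is $V_D(z)$ would be closed under changing any single coordinate, hence under finite-support changes. By density and closedness it would then be everything, again contradicting the existence of $w\neq z$ in $D$.
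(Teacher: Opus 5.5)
Your proof is correct and follows essentially the same route as the paper. The Voronoi cells form a disjoint closed cover, and the coordinatewise path from $z$ to $w$, together with the closedness of $V_D(z)$, yields a coordinate line meeting two cells; then Lemma~\ref{lines}, Sierpi\'nski's theorem, the countability of the nondegenerate intervals, and the uncountability (hence size $2^\omega$) of the closed set of singleton pieces and endpoints finish the argument. Your alternative for locating the line (invariance of $V_D(z)$ under finite-support changes, plus density and closedness) is a valid minor rephrasing of that same step.
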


\begin{proof} 
Let $D\subseteq\ell_1(\kappa)$ be a Chebyshev set containing distinct points
$z,w$. Then
$$
\{V_D(v):v\in D\}
$$
is a pairwise disjoint cover of $\ell_1(\kappa)$ by closed sets.

We first show that there is a line $L_{a, \alpha}=a+\R1_{\alpha}$
where $a(\alpha)=0$ and $a\in \ell_1(\kappa)$ which meets at least two distinct
Voronoi cells $V_D(x)$ for $x\in D$. Suppose otherwise. 

The set $\supp(w-z)$ is nonempty and
countable, so $\supp(w-z)=\{\alpha_n:n\in\N\}$ for some sequence
$(\alpha_n)_{n\in\N}$, where repetitions are allowed.  Put
$$
x_n:=z+(w-z)|\{\alpha_0,\ldots,\alpha_{n-1}\}
$$
for $n\in\N$. Then $x_0=z\in V_D(z)$ and $x_n$ converges to $w$. 
So, since $w\not\in V_D(z)$ there is $k\in \N$ which is minimal such that $x_{k+1}\not\in V_D(z)$.
As $x_k$ and $x_{k+1}$ differ only at the coordinate $\alpha_k$
they are both in the line $L_{a_k, \alpha_k} $ for appropriate $a_k$ satisfying
 $a_k(\alpha_k)=0$ and $a_k\in \ell_1(\kappa)$. The minimality of $k$ implies
that $L=L_{a_k, \alpha_k}$ meets at least two distinct sets $V_D(x)$.

Identify $L$ with $\R$. By Lemma~\ref{lines}, the family
$\{V_D(v)\cap L:v\in D,\ V_D(v)\cap L\neq\emptyset\}$
is a nontrivial partition of $\R$ into closed intervals. By the Sierpi\'nski theorem
from \cite{sierpinski} it is uncountable.
There can be only countably many nondegenerate members of it,  so the set of points
which form singleton members together with the endpoints of the
nondegenerate members is an uncountable closed subset of $L$. Hence it has
cardinality $2^\omega$.
Consequently the partition has cardinality $2^\omega$
and so
$|D|\geq 2^\omega$.
\end{proof}

\begin{theorem}\label{thm:mainchebyshev} Suppose that $\kappa<2^\omega$ is infinite. Then
the Banach space $\ell_1(\kappa)$ does not admit a nontrivial discrete Chebyshev set.
\end{theorem}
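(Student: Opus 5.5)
Theorem~\ref{thm:mainchebyshev} follows from Proposition~\ref{prop:chebyshev} by a cardinality count. Suppose towards a contradiction that $D\subseteq\ell_1(\kappa)$ is a nontrivial discrete Chebyshev set. Here nontrivial means $|D|\geq 2$, and discrete means that each point of $D$ is isolated in $D$; it is enough to use that $D$ is a discrete subspace. By Proposition~\ref{prop:chebyshev} we get $|D|\geq 2^\omega$. I will show that a discrete subset of $\ell_1(\kappa)$ has cardinality at most $\kappa<2^\omega$, which gives the contradiction.

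The cardinality bound comes from the density of the space. For infinite $\kappa$, the space $\ell_1(\kappa)$ has density $\kappa$: finitely supported vectors with rational coordinates form a dense set of size $\kappa$. In any metric space of density $\kappa$ (which is then of weight $\kappa$), every discrete subspace has cardinality at most $\kappa$. To see this directly, fix a dense set $Q$ with $|Q|=\kappa$. For each $d\in D$, choose a rational $r_d>0$ with $\mathrm{B}(\{d\},r_d)\cap D=\{d\}$ (open ball version). Then choose $q_d\in Q$ with $\|q_d-d\|<r_d/3$.

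Next I check that the assignment $d\mapsto(q_d,r_d)$ is injective. Suppose $(q_d,r_d)=(q_e,r_e)=(q,r)$. Then $\|d-e\|\leq\|d-q\|+\|q-e\|<2r/3<r=r_d$. Hence $e$ lies in the $r_d$-ball around $d$, so $e=d$. Therefore $|D|\leq|Q\times\Q|=\kappa$.

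Combining the two bounds, $2^\omega\leq|D|\leq\kappa<2^\omega$, which is a contradiction. The argument has no real obstacle. The only points needing care are the meaning of ``discrete'' and the use of Proposition~\ref{prop:chebyshev}. If the intended meaning is uniformly discrete (points separated by a fixed positive distance), that is a stronger hypothesis and the same argument applies. If the paper intends ``discrete'' as closed and discrete, that case is also covered.
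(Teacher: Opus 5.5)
Your proof is correct and matches the paper's argument: Proposition~\ref{prop:chebyshev} gives $|D|\geq 2^\omega$, while a discrete subset of $\ell_1(\kappa)$, which has density $\kappa$, has cardinality at most $\kappa<2^\omega$. The only difference is that you write out the injection $d\mapsto(q_d,r_d)$ behind the density bound, which the paper leaves implicit.
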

\begin{proof}
Since the density of $\ell_1(\kappa)$ is $\kappa$, it cannot admit  discrete sets bigger than $\kappa$.
Now the theorem follows from  Proposition \ref{prop:chebyshev}.
\end{proof}

\section*{Role of a digital assistant}

A digital assistant was used at the final stage of the preparation 
of this paper for mathematical discussions of the initial version of the paper
(which included all mathematical ideas and results besides those in the Appendix), 
checking arguments and improving the exposition. 
At this stage the assistant made a direct mathematical contribution to what is now
 the Appendix (nonexistent in the initial human-prepared version): 
the analysis of intersections of Voronoi cells with coordinate lines, 
initiated by the authors, led in the discussion with the digital
 assistant to Lemma~\ref{lines} (now with a different, human-readable proof). 
Unsolicited, the digital assistant subsequently proposed and proved Proposition~\ref{prop:chebyshev} 
and observed that these results yield Theorem~\ref{thm:mainchebyshev}. 
The authors checked and revised all arguments 
and take responsibility for the final statements and proofs.

\bibliographystyle{amsplain}

\end{document}